\title{\sc{Variants of a-T-menability for actions on non-commutative $L_{p}$-spaces}}
\theoremstyle{proclaim}
\newtheorem{thm}{Theorem}[section]
\newtheorem{df}[thm]{Definition}
\newtheorem{prp}[thm]{Proposition}
\newtheorem{lem}[thm]{Lemma}
\newtheorem{rem}[thm]{Remark}
\newtheorem{cor}[thm]{Corollary}
\newtheorem*{ack}{Acknowledgments}
\newcommand{\esp}{\textrm{ }}
\newcommand{\n}{\vert\vert}
\author{\sc{Baptiste OLIVIER}}
\begin{document}
\maketitle

\begin{abstract}
We investigate various characterizations of the Haagerup property $(H)$ for a second countable locally compact group $G$, in terms of orthogonal representations of $G$ on $L_{p}(\mathcal{M})$, the non-commutative $L_{p}$-space associated with the von Neumann algebra $\mathcal{M}$.\\
For a semi-finite von Neumann algebra $\mathcal{M}$, we introduce a variant of property $(H)$, namely property $H_{L_{p}(\mathcal{M})}$, defined in terms of orthogonal representations on $L_{p}(\mathcal{M})$ which have vanishing coefficients. We study the relationships between properties $(H)$ and $(H_{L_{p}(\mathcal{M})})$ for various von Neumann algebras $\mathcal{M}$.\\
We also characterize property $(H)$ in terms of strongly mixing actions on $L_{p}(\mathcal{M})$ for some finite von Neumann algebras $\mathcal{M}$.\\
We finally give constructions of proper actions of groups with the Haagerup property by affine isometries on $L_{p}(\mathcal{M})$ for some algebras $\mathcal{M}$, such as the hyperfinite ${\rm II}_{\infty}$ factor $\mathcal{B}(l_{2})\otimes R$. 
\end{abstract}

\section{Introduction}

The Haagerup property or a-$T$-menability is an important property of locally compact topological group with several applications in geometry and in theory of operator algebras (see \cite{Cherix2001}). \\

Recall that a second countable locally compact group $G$ has the Haagerup property $(H)$ (or is a-T-menable) if there exists a unitary representation $\pi:G\rightarrow\mathcal{U}(\mathcal{H})$ on a Hilbert space $\mathcal{H}$ which has vanishing coefficients (that is, $g\mapsto<\pi(g)\xi,\eta>$ belongs to $C_{0}(G)$ for all $\xi,\eta\in\mathcal{H}$) and which almost has invariant vectors (that is, there exists a sequence of unit vectors $(\xi_{n})_{n}$ in $\mathcal{H}$ such that $\n\pi(g)\xi_{n}-\xi_{n}\n\rightarrow0$ uniformly on compact subsets of $G$). For short, we will say that $G$ has $(H)$. As is well-known, $G$ has $(H)$ if and only if $G$ is a-$T$-menable, that is, $G$ admits a proper action by affine isometries on some Hilbert space $\mathcal{H}$.\\

In \cite{bader2007propertyTLp}, a variant of property $(T)$ relative to Banach spaces was studied. It is called property $(T_{B})$, and it is defined by the means of orthogonal representations on the Banach space $B$. It is natural to investigate variants of property $(H)$ for group actions on a Banach space $B$. In the whole paper, we say that $\pi :G\rightarrow O(B)$ is an orthogonal representation if $\pi$ is a homomorphism from $G$ to $O(B)$ such that the maps $g\mapsto\pi(g)x$ are continuous for every $x\in B$.

\begin{df}{\rm
Let $B$ be a Banach space, and $B^{*}$ its dual space. Let $\pi:G\rightarrow O(B)$ be an orthogonal representation. We say that the representation $\pi$ has vanishing coefficients if
$$\lim_{g\rightarrow\infty}<\pi(g)x,y>=0\textrm{ for all }x\in B,\esp y\in B^{*}.$$
}
\end{df}

\begin{df}{\rm
Let $B$ be a Banach space. We say that a group $G$ has property $(H_{B})$ if there exists a representation $\pi:G\rightarrow O(B)$ with vanishing coefficients and almost invariant vectors.}
\end{df}

The first aim of this paper is to investigate this property in the case where $B=L_{p}(\mathcal{M})$, the $L_{p}$-space associated with the von Neumann algebra $\mathcal{M}$. We will study the relationships between property $(H)$ and property $(H_{L_{p}(\mathcal{M})})$ for some semi-finite von Neumann algebras $\mathcal{M}$.

\begin{rem}\label{rem-defHLp}
{\rm (i) Property $(H_{L_{p}(\mathcal{M})})$ is inherited by closed subgroups.\\

(ii) By analogy with property $(T)$ and property $(H)$, property $(H_{L_{p}(\mathcal{M})})$ is a strong negation of property $(T_{L_{p}(\mathcal{M})})$ in the following sense : if a topological group $G$ admits a closed normal non-compact subgroup $H$ such that the pair $(G,H)$ has property $(T_{L_{p}(\mathcal{M})})$, then $G$ does not have property $(H_{L_{p}(\mathcal{M})})$.
}
\end{rem}

To our knowledge, property $(H_{L_{p}([0,1])})$ has not been studied in the literature. Our first main result gives a characterization of linear Lie groups which have this property (compare with Theorem 4.0.1 in \cite{Cherix2001}).\\
\begin{thm}\label{thm-Lie}
Let $G$ be a connected Lie group such that in its Levi decomposition $G=SR$, the semi-simple part $S$ has finite center. Let $1\leq p<\infty$, $p\neq2$. Then the following are equivalent :\\
(i) $G$ has property $(H_{L_{p}([0,1])})$ ;\\
(ii) $G$ has the Haagerup property $(H)$ ;\\
(iii) $G$ is locally isomorphic to a product $\prod_{i\in I}S_{i}\times M$, where $M$ is amenable, $I$ is finite, and for every $i\in I$, $S_{i}$ is a group $SO(n_{i},1)$ or $SU(m_{i},1)$ with $n_{i}\geq2$, $m_{i}\geq1$.
\end{thm}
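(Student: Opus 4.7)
The plan is to prove the equivalence (i) $\Leftrightarrow$ (ii); since (ii) $\Leftrightarrow$ (iii) is Theorem 4.0.1 of \cite{Cherix2001}, this closes the loop.

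For the implication (ii) $\Rightarrow$ (i), I would use the Gaussian functor. Let $\pi$ be a unitary representation of $G$ on a real Hilbert space $\mathcal{H}$ with vanishing coefficients and almost invariant vectors. To $\mathcal{H}$ is canonically associated a Gaussian probability space $(\Omega,\mu)$ on which the orthogonal action lifts to a measure-preserving action of $G$, inducing an orthogonal Koopman representation $\tilde\pi$ on each $L_p(\Omega,\mu)$, $1\leq p<\infty$. As $(\Omega,\mu)$ is a standard non-atomic probability space, $L_p(\Omega,\mu)$ is isometrically isomorphic to $L_p([0,1])$. Vanishing of $\pi$-coefficients transfers to $\tilde\pi$ through the Wiener--It\^o chaos decomposition of $L_2(\Omega)\ominus\mathbb{C}$: the $n$-th chaos is isomorphic to the $n$-fold symmetric tensor power of $\mathcal{H}$, on which $\pi^{\otimes n}$ has vanishing coefficients since $C_0(G)$ is stable under products; one passes from $L_2$ to $L_p$ by density of $L_\infty$ in $L_p$. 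Almost invariant unit vectors $\xi_n\in\mathcal{H}$ produce almost invariant vectors $W(\xi_n)/\|W(\xi_n)\|_p\in L_p(\Omega)$ via the Wiener integrals, using $\|W(\eta)\|_p=c_p\|\eta\|_{\mathcal{H}}$ and the intertwining $W\circ\pi(g)=\tilde\pi(g)\circ W$.

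For (i) $\Rightarrow$ (ii), the key input is the Banach--Lamperti classification of isometries of $L_p([0,1])$ for $p\neq 2$: every such isometry is of the form $f\mapsto h\cdot(f\circ T^{-1})$ where $T$ is a non-singular bijection of $[0,1]$ and $|h|^p=d(T_*\mu)/d\mu$. Applied to the given orthogonal representation, this yields a non-singular measurable action $T$ of $G$ on $[0,1]$ together with a measurable cocycle $h$. I would then introduce the Koopman representation $V(g)f=(d(T_g)_*\mu/d\mu)^{1/2}\,(f\circ T_g^{-1})$ on $L_2([0,1])$, which is unitary. The Mazur map $M_{p,2}\colon L_p\to L_2$, $f\mapsto|f|^{p/2-1}f$, is a uniform homeomorphism between the unit spheres, $G$-equivariant up to a unimodular phase, and intertwines $\pi$ with $V$; this carries almost invariant unit vectors of $\pi$ to almost invariant unit vectors of $V$.

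The main obstacle will be transferring vanishing of coefficients through this Banach--Lamperti/Mazur correspondence. One has to show: if $\int \pi(g)f\cdot f'\,d\mu\to 0$ for all $f\in L_p$, $f'\in L_{p'}$, then $\int V(g)u\cdot\bar v\,d\mu\to 0$ for $u,v$ in a suitable $V$-invariant complement of the fixed vectors in $L_2$. The natural route is to restrict to bounded $u,v$ and use the cocycle identity $|h_g|^p=d(T_g)_*\mu/d\mu$ to re-express the Koopman coefficients of $V$ as $\pi$-coefficients of the $p$-th power companions $|u|^{2/p}\,\mathrm{sgn}(u)$ and $|v|^{2/p'}\,\mathrm{sgn}(v)$, so that vanishing on $L_p$ propagates to vanishing on $L_2$. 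The resulting unitary representation of $G$ then witnesses property $(H)$, and the fact that the Banach--Lamperti rigidity fails at $p=2$ is precisely why the hypothesis $p\neq 2$ is indispensable for this direction.
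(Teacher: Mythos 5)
Both substantive directions of your proposal have gaps. For (ii) $\Rightarrow$ (i): the Gaussian space $(\Omega,\mu)$ is a \emph{probability} space, so the constant function $1$ lies in $L_p(\Omega)$ and is invariant under the Koopman representation; hence $\langle\tilde\pi(g)1,1\rangle\equiv 1$ and $\tilde\pi$ does not have vanishing coefficients in the sense of the paper's definition, which requires $\langle\pi(g)x,y\rangle\to 0$ for \emph{all} $x\in B$, $y\in B^*$. Your own phrase ``$L_2(\Omega)\ominus\mathbb{C}$'' shows you are restricting to the mean-zero part, but that subspace is not $L_p([0,1])$, so it does not witness property $(H_{L_p([0,1])})$. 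The paper circumvents exactly this obstruction by producing a closed subgroup $H\leq G$ such that $G/H$ carries an \emph{infinite} invariant measure (so constants are not in $L_p$) and the quasi-regular representation $\lambda_{G/H}$ has vanishing coefficients and almost invariant vectors; this is where the real work lies, namely the existence (Millson, Kazhdan) of lattices in $SO(n,1)$ and $SU(m,1)$ with infinite abelianization, an induction argument via Howe--Moore, and a covering lemma to pass between locally isomorphic groups. None of that is replaceable by the Gaussian functor, precisely because of the constants.

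For (i) $\Rightarrow$ (ii): the transfer of vanishing coefficients through the Mazur map is not established by your sketch. With $\pi^p(g)f=\omega_g|h_g|\,(f\circ T_g^{-1})$ and $|h_g|^p$ the Radon--Nikodym derivative, the Mazur conjugate satisfies $\pi^2(g)u=\omega_g|h_g|^{p/2}(u\circ T_g^{-1})$, so a coefficient of $\pi^2$ involves the density to the power $p/2$ while a coefficient of $\pi^p$ involves it to the power $1$; equating them forces test functions that depend on $g$, so the ``$p$-th power companions'' device does not close. (That this transfer is genuinely problematic is reflected in the paper's statement: the implication is only claimed for connected Lie groups with the stated structural hypothesis, not for general groups.) The paper instead proves the contrapositive structurally: if $G$ fails $(H)$, Cornulier's theorem yields a non-compact closed normal subgroup $R_T$ with $(G,R_T)$ having relative property $(T)$, Bader--Furman--Gelander--Monod upgrades this to relative $(T_{L_p([0,1])})$, and Remark \ref{rem-defHLp} then rules out $(H_{L_p([0,1])})$. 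You should replace your analytic transfer by this argument, or supply a proof of the missing coefficient estimate.
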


\begin{rem}
{\rm
(i) The previous theorem gives a classification of linear groups with property $(H_{L_{p}([0,1])})$ since, for any such group, the center of the semi-simple part in the Levi decomposition is finite (see Proposition 4.1 of Chapter ${\rm XVIII}$ in \cite{Hochschild1965structureLiegroups}).\\

(ii) We had to exclude groups $G=SR$ with $S$ having infinite center, as we are not be able to prove $(iii)\esp\Rightarrow (i)$ for the universal covers of $SO(n,1)$ and $SU(n,1)$.
}
\end{rem}
Theorem \ref{thm-Lie} has the following immediate consequence.
\begin{cor}\label{cor-Lie}
Let $G$ be a closed subgroup of a Lie group of the form $\prod_{i\in I}S_{i}\times M$, where $M$ is amenable, $I$ is finite, and for every $i\in I$, $S_{i}$ is a group $SO(n,1)$ or $SU(m,1)$. Then $G$ has property $(H_{L_{p}([0,1])})$ for all $1\leq p<\infty$.
\end{cor}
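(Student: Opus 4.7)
The corollary is meant as an immediate consequence of Theorem \ref{thm-Lie} combined with the closed-subgroup inheritance stated in Remark \ref{rem-defHLp}(i). The plan has two short steps: (1) show the ambient Lie group $L = \prod_{i \in I} S_{i} \times M$ itself has property $(H_{L_{p}([0,1])})$; (2) transfer to the closed subgroup $G$.

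For step (1), the group $L$ trivially matches the shape of condition (iii) of Theorem \ref{thm-Lie}, so it suffices to check that $L$ falls under the hypothesis of that theorem, namely that the semi-simple factor in the Levi decomposition of $L$ has finite center. The semi-simple Levi factor of $L$ is $\prod_{i} S_{i} \times K$, where $K$ is the (necessarily compact, since $M$ is amenable) semi-simple Levi factor of $M$. Each $SO(n,1)$ and $SU(m,1)$ has finite center, as does any compact semi-simple connected Lie group, so the hypothesis is satisfied. Theorem \ref{thm-Lie}, implication (iii)$\Rightarrow$(i), then gives property $(H_{L_{p}([0,1])})$ for $L$ for all $1 \leq p < \infty$, $p \neq 2$. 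For $p=2$ the space $L_{2}([0,1])$ is a separable Hilbert space, so property $(H_{L_{2}([0,1])})$ coincides with the classical Haagerup property $(H)$, which $L$ has by implication (iii)$\Rightarrow$(ii) of the same theorem.

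For step (2), since $G$ is closed in $L$, Remark \ref{rem-defHLp}(i) applies verbatim and hands us property $(H_{L_{p}([0,1])})$ for $G$. The only subtlety I would want to dispose of explicitly is the connectedness assumption built into Theorem \ref{thm-Lie}: if $L$ is not connected, I would pass to its identity component $L^{0}$, apply the theorem there, and then either invoke stability of $(H_{L_{p}([0,1])})$ under finite-index extensions (a standard argument using induced representations) or note that $G \cap L^{0}$ is an open, finite-index closed subgroup of $G$ and argue directly. This is a routine verification rather than a genuine obstacle, so I do not expect any serious difficulty in the proof.
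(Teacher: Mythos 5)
Your proof is correct and is exactly the argument the paper intends when it calls the corollary an ``immediate consequence'' of Theorem \ref{thm-Lie}: the ambient group satisfies condition (iii) (its Levi factor has finite center since $SO(n,1)$, $SU(m,1)$ and compact semi-simple groups do), so (iii)$\Rightarrow$(i) gives the property for $p\neq 2$, the $p=2$ case reduces to the classical Haagerup property, and Remark \ref{rem-defHLp}(i) passes it to closed subgroups. Your extra care about $p=2$ and connectedness addresses the only points the paper leaves tacit.
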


Let $1\leq p<\infty$, $p\neq2$. Depending on the von Neumann algebra $\mathcal{M}$ considered, properties $(H)$ and $(H_{L_{p}(\mathcal{M})})$ can be quite distinct. We show that, for $p\neq2$, property $(H_{C_{p}})$ is equivalent to property $(H)$ (Theorem \ref{thm-H=HCp}), and that only compact groups have property $(H_{S_{p}})$ (Theorem \ref{thm-HSp}).\\
Then we show that, among the connected second countable locally compact groups, only the compact ones have property $(H_{l_{p}})$.  Among the totally disconnected second countable groups, only the amenable ones have property $(H_{l_{p}})$ (see Theorem \ref{thm-lp}).\\

In \cite{Cherix2001} (see Theorem 2.1.5 and Proposition 2.3.1), Jolissaint studied property $(H)$ by the means of group actions by automorphisms of some von Neumann algebras. When dealing with finite von Neumann algebras, we can give the following definition of a strongly mixing representation.
\begin{df}\label{df-sm}{\rm
Let $\mathcal{M}$ be a finite von Neumann algebra with trace $\tau$. We say that a representation $\pi:G\rightarrow O(L_{p}(\mathcal{M}))$ is strongly mixing if
$$\lim_{g\rightarrow\infty}\tau(\pi(g)(x)y)=\tau(x)\tau(y)\textrm{ for all }x,y\in\mathcal{M}.$$
}
\end{df}

\begin{df}{\rm
Let $\mathcal{M}$ be a finite von Neumann algebra. We say that a group $G$ has property $(H_{L_{p}(\mathcal{M})}^{\rm mix})$ if there exists a representation $\pi:G\rightarrow O(L_{p}(\mathcal{M}))$ which is strongly mixing and has almost invariant vectors in the complement $L_{p}(\mathcal{M})'$ of the $\pi(G)$-invariant vectors.}
\end{df}

Here is our main result concerning the relationship between property $(H_{L_{p}(\mathcal{M})}^{mix})$ and property $(H)$.
\begin{thm}\label{thm-Hpmix}
Let $G$ be a locally compact second countable group. Let $1\leq p<\infty$, $p\neq2$. 
\begin{enumerate}
\item Let $\mathcal{M}$ be a finite von Neumann algebra, and let $1\leq p<\infty$. If $G$ has property $(H_{L_{p}(\mathcal{M})}^{mix})$, then $G$ has the Haagerup property $(H)$.
\item Assume that $G$ has the Haagerup property $(H)$. Then $G$ has property $(H_{L_{p}(\mathcal{M})}^{mix})$ in the two following cases.\\
(i) $(\mathcal{M},\tau)=(L^{\infty}([0,1]),\lambda)$ with $\lambda$ the Lebesgue measure ;\\
(ii) $\mathcal{M}=R$ is the hyperfinite ${\rm II}_{1}$ factor.
\end{enumerate}
\end{thm}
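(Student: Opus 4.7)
From a strongly mixing orthogonal representation $\pi\colon G\to O(L_p(\mathcal{M}))$ with almost invariant vectors in $L_p(\mathcal{M})'$, the plan is to extract a unitary representation of $G$ on the Hilbert space $L_2(\mathcal{M})$ witnessing the Haagerup property. The main tool is Yeadon's description of isometries of non-commutative $L_p$ for $p\neq 2$: each $\pi(g)$ restricts to $\mathcal{M}$ as a Jordan $\ast$-automorphism $J_g$ of $\mathcal{M}$, twisted by a unitary and a Radon--Nikodym type cocycle. Since $\mathcal{M}$ is finite, one checks that $\tau\circ J_g=\tau$, and since $g\mapsto\pi(g)$ is a homomorphism, $g\mapsto J_g$ defines a trace-preserving action of $G$ by Jordan $\ast$-automorphisms, extending canonically to a unitary representation $\rho\colon G\to\mathcal{U}(L_2(\mathcal{M}))$. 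Unravelling the Yeadon decomposition, the strong mixing of $\pi$ becomes the statement $\tau(J_g(x)y)\to\tau(x)\tau(y)$ for $x,y\in\mathcal{M}$, which is precisely the vanishing coefficients condition for $\rho$ restricted to $L_2(\mathcal{M})\ominus\mathbb{C}\mathbf{1}$.

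\textbf{Part 2.} Fix a unitary representation $\pi_0\colon G\to\mathcal{U}(\mathcal{H})$ on a real Hilbert space $\mathcal{H}$ witnessing $(H)$. For case (i), I would apply the classical Gaussian functor to $\pi_0$, producing a measure-preserving action of $G$ on a standard probability space that is strongly mixing precisely because the coefficients of $\pi_0$ vanish at infinity. The induced Koopman representation on $L_p$ of this space (identified with $L_p([0,1])$) is then strongly mixing in the sense of Definition \ref{df-sm}, and almost invariant vectors are obtained from the first Wiener chaos of almost invariant vectors of $\pi_0$. For case (ii), the analog is the fermionic second quantization: an orthogonal representation on a real Hilbert space yields a Bogoliubov action on the CAR-algebra, whose tracial GNS closure is the hyperfinite $\mathrm{II}_1$ factor $R$. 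Vanishing of coefficients of $\pi_0$ translates into strong mixing of the induced action on $L_p(R)$, while almost invariant vectors for $\pi_0$ yield, via the first Fock chaos, almost invariant vectors in $L_p(R)\ominus\mathbb{C}\mathbf{1}$.

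\textbf{Main difficulties.} In Part 1, the delicate step is transferring almost invariant vectors from $L_p(\mathcal{M})'$ to $L_2(\mathcal{M})\ominus\mathbb{C}\mathbf{1}$: the plan is to approximate each unit almost-invariant vector $\xi_n\in L_p(\mathcal{M})'$ by a bounded element $\eta_n\in\mathcal{M}\cap L_p(\mathcal{M})'$ (using density of $\mathcal{M}$ in $L_p(\mathcal{M})$), then estimate $\|\rho(g)\eta_n-\eta_n\|_2$ in terms of $\|\pi(g)\eta_n-\eta_n\|_p$ together with uniform control of $\|\eta_n\|_\infty$, which is more subtle in the range $p<2$. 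In Part 2, the corresponding control of almost invariant vectors in the Gaussian and fermionic constructions relies on hypercontractivity-type estimates bounding $L_p$-norms by $L_2$-norms on the first chaos, which then translates $\|\pi_0(g)v-v\|_2$ into the $L_p$-norm of the corresponding chaos vector; handling the non-commutative fermionic case in (ii) will require slightly more care than the classical Gaussian case.
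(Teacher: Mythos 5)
Your Part 1 has a genuine gap at its central step. You pass from $\pi(g)=u_gB_gJ_g$ (Yeadon decomposition) to "the trace-preserving action $g\mapsto J_g$" and assert that strong mixing of $\pi$ \emph{becomes} $\tau(J_g(x)y)\to\tau(x)\tau(y)$. Neither claim is automatic. First, $g\mapsto J_g$ is not a homomorphism in general: the composition law is $J_{g_1g_2}(x)=J_{g_1}^{1}(J_{g_2}(x))+J_{g_1}^{2}(u_{g_2}J_{g_2}(x)u_{g_2}^{*})$, which only collapses to $J_{g_1}\circ J_{g_2}$ once one knows $u_g=1$. Second, $\tau\circ J_g=\tau$ is \emph{not} a consequence of finiteness of $\mathcal{M}$ (a finite von Neumann algebra need not have a unique trace; Yeadon only gives $\tau(x)=\tau(B_g^{p}J_g(x))$, so $\tau\circ J_g=\tau$ is equivalent to $B_g=1$). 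Third, strong mixing is a statement about $\tau(u_gB_gJ_g(x)y)$, not about $\tau(J_g(x)y)$. The whole point of the paper's Lemma \ref{lem-smJordan} is that the strong mixing hypothesis itself forces $u_g=B_g=1$: one uses mixing to get $\tau(u_gB_g)=1$ for every $g$ (via $\tau(v_{gg_0})\to\tau(v_{g_0})$), then the equality cases of H\"older and Cauchy--Schwarz to conclude $u_gB_g=1$. Without this argument your reduction to a unitary representation on $L_2(\mathcal{M})\ominus\mathbb{C}1$ does not go through. Relatedly, your proposed transfer of almost invariant vectors (approximating by elements of $\mathcal{M}$ with "uniform control of $\|\eta_n\|_\infty$") is not secured: there is no reason unit almost invariant vectors in $L_p(\mathcal{M})'$ admit approximants with uniformly bounded $L^\infty$-norm. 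The paper avoids this entirely by conjugating with the Mazur map and invoking its local uniform continuity (Proposition \ref{prp-conjugatepi}), together with Lemma \ref{lem-smL0} identifying $L_p(\mathcal{M})'$ with $L_p^0(\mathcal{M})$ — a step you also need but do not address.

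Your Part 2, by contrast, is a legitimate and genuinely different route: the paper simply cites Jolissaint's Theorem 2.1.5 and the Connes--Weiss construction from \cite{Cherix2001} to obtain a strongly mixing action with an asymptotically invariant sequence of projections $(e_n)$, and then sets $v_n=e_n-\tau(e_n)1$; you instead rebuild these actions via the Gaussian functor and the fermionic (CAR/Bogoliubov) functor and take almost invariant vectors in the first chaos. This is essentially how the cited theorems are proved, and in the fermionic case it is even cleaner than you suggest: the real-linear first chaos $\xi\mapsto a(\xi)+a(\xi)^{*}$ embeds isometrically into every $L_p(R)$ (since $|a(\xi)+a(\xi)^{*}|=\|\xi\|1$), so no hypercontractivity is needed and the transfer of almost invariance is exact. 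You should still verify strong mixing in the sense of Definition \ref{df-sm} on a dense set of Wick words, and note that the first-chaos vectors are trace-zero, hence lie in $L_p(R)'$ by Lemma \ref{lem-smL0}.
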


In \cite{Nowak2006} (see also \cite{ChatterjiCo2010medianspaces}), the author gave a characterization of a-$T$-menability with actions on commutative $L_{p}$-spaces. More precisely, he proves the following theorem :
\begin{thm}\label{thm-Nowak}{\rm \cite{Nowak2006}} Let $1<p<2$ and let $G$ be a second countable locally compact group. Then the following conditions are equivalent :\\
(i) $G$ has the Haagerup property $(H)$.\\
(ii) $G$ admits a proper affine isometric action on $L_{p}([0,1])$.
\end{thm}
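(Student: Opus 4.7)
I would establish the two implications separately. For $(ii)\Rightarrow(i)$ I would extract from a proper affine isometric action on $L_p([0,1])$ a proper continuous conditionally negative definite function on $G$. For $(i)\Rightarrow(ii)$ I would manufacture the $L_p$-action by composing the Hilbert-space affine action provided by $(H)$ with the Gaussian functor, which isometrically (up to a scalar) embeds a real Hilbert space into every $L_q$ of a Gaussian probability space while promoting orthogonal transformations to measure-preserving ones.

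For $(ii)\Rightarrow(i)$, let $\alpha(g)x=\pi(g)x+b(g)$ be a proper affine isometric action on $L_p([0,1])$, with $\pi$ orthogonal and $b$ a proper $1$-cocycle. The key classical input (Bretagnolle--Dacunha-Castelle--Krivine) is that for $1\leq p\leq 2$ the kernel $(x,y)\mapsto\|x-y\|_p^p$ is conditionally negative definite on $L_p$. Hence $\psi(g):=\|b(g)\|_p^p$ is a continuous proper conditionally negative definite function on $G$. By Schoenberg's theorem, the functions $e^{-t\psi}$ are continuous and positive definite, they lie in $C_0(G)$ by properness of $\psi$, and they tend to $1$ uniformly on compacta as $t\to 0^+$. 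Applying the GNS construction produces a unitary representation with vanishing coefficients and almost invariant vectors, i.e.\ property $(H)$.

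For $(i)\Rightarrow(ii)$, second countability of $G$ lets me assume the affine action $\alpha(g)\xi=\pi(g)\xi+b(g)$ given by $(H)$ takes place on a separable real Hilbert space $\mathcal{H}$ (restrict to the closed affine hull of the orbit of $0$). Let $(\Omega,\mu)$ be the Gaussian probability space of $\mathcal{H}$: there is a linear map $\iota:\mathcal{H}\to L_2(\Omega,\mu)$ whose image consists of centered Gaussian variables, lies in every $L_q(\Omega,\mu)$ with $\|\iota(\xi)\|_q=c_q\|\xi\|_2$ where $c_q:=\|g\|_q$ for $g\sim N(0,1)$. By the second-quantisation construction, every $U\in O(\mathcal{H})$ lifts to a measure-preserving automorphism of $(\Omega,\mu)$, yielding a strongly continuous orthogonal representation $\tilde\pi:G\to O(L_p(\Omega,\mu))$ satisfying $\tilde\pi(g)\circ\iota=\iota\circ\pi(g)$. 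Setting $\tilde b(g):=\iota(b(g))$ produces a $1$-cocycle for $\tilde\pi$ with $\|\tilde b(g)\|_p=c_p\|b(g)\|_2\to\infty$ as $g\to\infty$, so the resulting affine action on $L_p(\Omega,\mu)$ is proper. Since $(\Omega,\mu)$ is a standard atomless probability space, $L_p(\Omega,\mu)\cong L_p([0,1])$ isometrically, and the action transports to $L_p([0,1])$.

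The main technical point to handle with care is the strong continuity of the Gaussian-induced representation $\tilde\pi$ on $L_p(\Omega,\mu)$, which follows from continuity of $\pi$ together with the standard continuity properties of the second quantization functor for the strong operator topology. A secondary point is the reduction to a separable $\mathcal{H}$, which relies on second countability of $G$ to ensure that $\{b(g):g\in G\}$ is separable and that its closed affine hull is $G$-invariant; here the properness of the restricted action is immediate since its cocycle agrees with $b$.
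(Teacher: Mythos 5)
First, a point of order: the paper does not prove this statement at all --- Theorem \ref{thm-Nowak} is quoted from \cite{Nowak2006} as a known result, so there is no in-paper proof to compare against; the closest relative is Theorem \ref{thm-proper1}, the non-commutative analogue of $(i)\Rightarrow(ii)$, whose proof ``adapts the proof given in \cite{Nowak2006}''. Your argument is correct. For $(ii)\Rightarrow(i)$ you give the standard route: $(x,y)\mapsto\Vert x-y\Vert_p^p$ is conditionally negative definite on $L_p$ for $p\le 2$, the cocycle relation $b(g^{-1}h)=\pi(g^{-1})(b(h)-b(g))$ gives $\psi(g^{-1}h)=\Vert b(h)-b(g)\Vert_p^p$, so $\psi=\Vert b(\cdot)\Vert_p^p$ is a proper continuous conditionally negative definite function and Schoenberg plus GNS yields $(H)$. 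For $(i)\Rightarrow(ii)$ your Gaussian second-quantization is a genuinely different construction from the one the paper adapts: you equivariantly embed the Hilbert space into $L_p$ of a Gaussian probability space and push a \emph{single} proper cocycle through, getting properness for free from $\Vert\iota(\xi)\Vert_p=c_p\Vert\xi\Vert$. The method visible in the proof of Theorem \ref{thm-proper1} (and, in the commutative case, in \cite{Nowak2006}) instead starts from a strongly mixing action with almost invariant vectors $v_n$, transports them to $L_p$ by the Mazur map, and assembles the proper cocycle as the $\ell_p$-direct sum of the coboundaries $g\mapsto\pi^{p}(g)w_n-w_n$, properness coming from the mixing property bounding each summand below at infinity. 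Your route is cleaner and even gives $(i)\Rightarrow(ii)$ for all $0<p<\infty$, but it does not transfer to the non-commutative $L_p(\mathcal{M})$ setting the paper is after (there is no Gaussian functor landing equivariantly in $L_p(R)$), which is precisely why the paper uses the direct-sum-of-coboundaries construction instead. The only items you should make fully explicit are the cocycle identity above (needed for conditional negative definiteness of $\psi$) and the strong continuity of the second-quantized representation on $L_p(\Omega,\mu)$, which you flag correctly; neither is a gap.
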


Let $G$ be a second countable locally compact group with property $(H)$. By analogy with Theorem \ref{thm-Nowak}, we construct (see Theorem \ref{thm-proper1}) a proper action by affine isometries of $G$ on the space $L_{p}(l^{\infty}\otimes R)$. Using this construction, we obtain the following theorem.
\begin{thm}\label{thm-proper2}
Let $G$ be a second countable locally compact group with the Haagerup property. Let $1\leq p<\infty$. Then there exists a proper action of $G$ by affine isometries on $L_{p}(\mathcal{M})$, where $\mathcal{M}=\mathcal{B}(l_{2})\otimes R$ is the hyperfinite $II_{\infty}$ factor.
\end{thm}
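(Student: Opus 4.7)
The plan is to deduce Theorem \ref{thm-proper2} from Theorem \ref{thm-proper1} by constructing a $G$-equivariant isometric embedding $\iota:L_{p}(l^{\infty}\otimes R)\hookrightarrow L_{p}(\mathcal{B}(l_{2})\otimes R)$ and then pushing the proper affine isometric action across it. The target $\mathcal{B}(l_{2})\otimes R$ carries the natural semifinite trace $\operatorname{Tr}\otimes\tau_{R}$, so its non-commutative $L_{p}$-space is well defined for every $1\leq p<\infty$.

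To build $\iota$, fix an orthonormal basis $(e_{n})_{n}$ of $l_{2}$ and identify $l^{\infty}$ with the von Neumann subalgebra of $\mathcal{B}(l_{2})$ consisting of the operators diagonal in this basis. A direct computation shows that the standard trace $\operatorname{Tr}$ restricts to the counting-measure trace on $l^{\infty}$; hence the inclusion $l^{\infty}\hookrightarrow\mathcal{B}(l_{2})$ is unital, normal and trace-preserving, as is the spatial tensor product inclusion $l^{\infty}\otimes R\hookrightarrow\mathcal{B}(l_{2})\otimes R$. By the standard functoriality of non-commutative $L_{p}$-spaces under trace-preserving inclusions of semifinite von Neumann algebras, this yields the required isometric embedding $\iota$ for every $1\leq p<\infty$.

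The orthogonal representation underlying the affine action of Theorem \ref{thm-proper1} should be induced by an action of $G$ on $l^{\infty}\otimes R$ by trace-preserving $*$-automorphisms, built from a permutation action on the $l^{\infty}$-coordinates combined with an automorphism action on the $R$-fibres. Such automorphisms extend naturally to $\mathcal{B}(l_{2})\otimes R$: a permutation $\sigma$ of $\mathbb{N}$ is spatially implemented on $l_{2}$ by the unitary $U_{\sigma}$ permuting $(e_{n})_{n}$, and $\operatorname{Ad}(U_{\sigma})\otimes\gamma$ is a trace-preserving $*$-automorphism of $\mathcal{B}(l_{2})\otimes R$ restricting to the original one on $l^{\infty}\otimes R$. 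This gives an orthogonal representation $\widetilde{\pi}:G\to O(L_{p}(\mathcal{B}(l_{2})\otimes R))$ for which $\iota$ is equivariant; the proper 1-cocycle $b$ of Theorem \ref{thm-proper1} then pushes forward to a 1-cocycle $\iota\circ b$ for $\widetilde{\pi}$ satisfying $\|(\iota\circ b)(g)\|_{p}=\|b(g)\|_{p}\to\infty$ as $g\to\infty$, yielding the desired proper affine isometric action of $G$ on $L_{p}(\mathcal{B}(l_{2})\otimes R)$.

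The main potential obstacle is verifying that the specific automorphism action produced by Theorem \ref{thm-proper1} is really of the permutation-plus-fibrewise form needed for this spatial extension. Any Bernoulli-type or shift construction on $l^{\infty}\otimes R$, which is the standard way of producing proper actions in this setting, satisfies this requirement for free; beyond that, the argument reduces entirely to the functoriality of non-commutative $L_{p}$-spaces under trace-preserving inclusions. In particular, no new analytic input is required beyond Theorem \ref{thm-proper1}.
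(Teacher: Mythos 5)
Your proposal is correct and follows essentially the same route as the paper: identify $l^{\infty}$ with the diagonal operators in $\mathcal{B}(l_{2})$, note that the trace-preserving inclusion $l^{\infty}\otimes R\hookrightarrow\mathcal{B}(l_{2})\otimes R$ induces an isometric embedding of the $L_{p}$-spaces, extend the representation, and push the proper cocycle of Theorem \ref{thm-proper1} forward. The ``main potential obstacle'' you flag is in fact vacuous: the representation produced in Theorem \ref{thm-proper1} acts as $\mathrm{id}_{l^{\infty}}\otimes\pi(g)$ (the same automorphism of $R$ in every coordinate, with no permutation of the coordinates), so it extends directly to $A\otimes a\mapsto A\otimes\pi(g)a$ on $\mathcal{B}(l_{2})\otimes R$ without any spatial implementation of a permutation.
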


This paper is organized as follows. In section 2, we recall some general facts about isometries of non-commutative $L_{p}(\mathcal{M})$-spaces, and orthogonal representations on $L_{p}(\mathcal{M})$. Section 3 is devoted to the study of property $(H_{L_{p}(\mathcal{M})})$ for the algebras $\mathcal{M}=l^{\infty}$, $\mathcal{M}=(\oplus_{n}\mathcal{M}_{n})_{\infty}$, $\mathcal{M}=\mathcal{B}(\mathcal{H})$, and $\mathcal{M}=L^{\infty}([0,1])$. In section 4, we show Theorem \ref{thm-Hpmix}. We give in section 5 two constructions of proper actions of a-$T$-menable groups by affine isometries on $L_{p}(\mathcal{M})$-spaces : one on the space $L_{p}(l^{\infty}\otimes R)$, and one on $L_{p}(\mathcal{B}(l_{2})\otimes R)$.

\begin{ack}{\rm
We wish to thank Bachir Bekka for all his very useful advice and comments on this paper. We are also grateful to the IRMAR for the stimulating atmosphere and the quality of working conditions. }
\end{ack}

\section{Orthogonal representations on non-commutative $L_{p}$-spaces}

Let $\mathcal{M}$ be a von Neumann algebra equipped with a semi-finite faithful normal trace $\tau$. The non-commutative $L_{p}(\mathcal{M},\tau)$-space is the completion of the set
\begin{displaymath}
\{x\in\mathcal{M}\esp\vert\esp\vert\vert x\vert\vert_{p}<\infty\esp\}
\end{displaymath}
with respect to the norm $\vert\vert x\vert\vert_{p}=\tau(\vert x\vert^{p})^{\frac{1}{p}}$. The dual space of $L_{p}(\mathcal{M},\tau)$ can be identified with $L_{p'}(\mathcal{M},\tau)$, where $p'$ is the conjugate exponent of $p$. In the case of $\tau(f)=\int f d\mu$ and $\mathcal{M}=L^{\infty}(X,\mu)$ for a measured space $(X,\mu)$, this is the classical space $L_{p}(X,\mu)$. For more details on non-commutative $L_{p}$-spaces, see the survey \cite{Pisier2003Lpspaces}.\\

Let $\mathcal{M}_{n}$ be the space of $n\times n$ matrices with complex coefficients. Now let $(\oplus_{n}\mathcal{M}_{n})_{\infty}=\{\oplus_{n}x_{n}\esp\vert\esp x_{n}\in\mathcal{M}_{n},\esp \sup_{n}\vert\vert x_{n}\vert\vert<\infty\}$, and $S_{p}=L_{p}(\mathcal{M})=\{\oplus_{n}x_{n}\esp\vert\esp x_{n}\in\mathcal{M}_{n},\sum_{n}{\rm Tr}(\vert x_{n}\vert^{p})<\infty \}$. Given a separable Hilbert space $\mathcal{H}$, denote by $C_{p}=\{x\in\mathcal{B}(\mathcal{H})\esp\vert\esp{\rm Tr} (\vert x\vert^{p})<\infty\}$ the Schatten $p$-ideals. We denote by $O(L_{p}(\mathcal{M}))$ the group of linear bijective isometries of $L_{p}(\mathcal{M})$. \\

The following result, due to F.J.Yeadon \cite{yeadon1980isometries}, gives a description of isometries of non-commutative $L_{p}$-spaces :
\begin{thm}\label{thm2.3}{\rm \cite{yeadon1980isometries}}
Let $1\leq p<\infty$ and $p\neq2$. Let $\mathcal{M}$ be a semi-finite von Neumann algebra with trace $\tau$. A linear map
$$U:L_{p}(\mathcal{M},\tau)\rightarrow L_{p}(\mathcal{M},\tau)$$
is a surjective isometry if and only if there exists :\\
- a normal Jordan *-isomorphism $J:\mathcal{M}\rightarrow\mathcal{M}$,\\
- a unitary $u\in\mathcal{M}$,\\
- a positive self-adjoint operator $B$ affiliated with $\mathcal{M}$ such that the spectral projections of $B$ commute with $\mathcal{M}$, the support of $B$ is $s(B)=1$, and $\tau(x)=\tau(B^{p}J(x))$ for all $x\in\mathcal{M}^{+}$,
satisfying 
$$U(x)=uBJ(x)\esp\textrm{for all } x\in\mathcal{M}\cap L_{p}(\mathcal{M}).$$
Moreover, such a decomposition is unique. We will say that such a decomposition is the Yeadon decomposition of the isometry $U$.
\end{thm}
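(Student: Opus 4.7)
The plan is to prove sufficiency by direct computation and necessity by the non-commutative analog of Lamperti's strategy. For sufficiency, given data $(u,B,J)$ as in the statement, I would compute
$$\vert\vert uBJ(x)\vert\vert_{p}^{p}=\tau(\vert uBJ(x)\vert^{p})=\tau(B^{p}J(\vert x\vert^{p}))=\tau(\vert x\vert^{p})=\vert\vert x\vert\vert_{p}^{p},$$
using that $u$ is unitary, that the spectral projections of $B$ commute with $\mathcal{M}$ (so $B$ commutes past $J(x)$ in the modulus computation), that $J$ is a Jordan *-isomorphism applied to the positive element $\vert x\vert^{p}$, and finally the trace identity $\tau(y)=\tau(B^{p}J(y))$ for $y\in\mathcal{M}^{+}$.

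For necessity, the central tool is a rigidity statement derived from the fact that $L_{p}(\mathcal{M})$ is strictly convex for $p\neq 2$: the equality case of a Clarkson-type inequality forces orthogonal projections to be sent to "disjointly supported" elements. Concretely, I would first show that if $e,f\in\mathcal{M}\cap L_{p}(\mathcal{M})$ are projections with $ef=0$, and $U(e)=v_{e}h_{e}$, $U(f)=v_{f}h_{f}$ are the polar decompositions, then the positive parts satisfy $h_{e}h_{f}=0$ and the left supports are mutually orthogonal. This step typically proceeds by differentiating $t\mapsto\vert\vert a+tb\vert\vert_{p}^{p}$ at $t=0$ for appropriate $a,b$ and exploiting that, for $p\neq 2$, the derivative formula characterizes orthogonality in a way specific to non-commutative $L_{p}$.

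Once this non-commutative disjointness is established, I would define $J$ on the projection lattice of $\mathcal{M}_{\tau}=\mathcal{M}\cap L_{p}(\mathcal{M})$ by $J(e)=s(\vert U(e)\vert)$, verify that it is additive on orthogonal families and respects the Jordan product $e\mapsto eme$ (this step forces the conclusion to be Jordan rather than fully multiplicative, the discrepancy accounting for possible transposition on type $I_{n}$ summands), then extend $J$ to all of $\mathcal{M}$ by normality using an increasing net $e_{\alpha}\uparrow 1$ in $\mathcal{M}_{\tau}$. The operator $B$ is built by gluing the positive parts $\vert U(e_{\alpha})\vert$ together: since these are compatible under the disjointness established above, they assemble into a positive self-adjoint operator affiliated with $\mathcal{M}$ whose spectral projections commute with $\mathcal{M}$. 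The unitary $u$ is obtained analogously from the partial isometries $v_{e_{\alpha}}$. The trace relation $\tau(x)=\tau(B^{p}J(x))$ on $\mathcal{M}^{+}$ is then forced by the isometry equation $\vert\vert e\vert\vert_{p}^{p}=\vert\vert U(e)\vert\vert_{p}^{p}$ applied to projections and linearized via spectral approximation. Uniqueness of the decomposition follows from uniqueness of polar decomposition together with the faithfulness of $\tau$.

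The hard part will be the rigidity step: extracting non-commutative disjointness of $U(e)$ and $U(f)$ from the equality case of a suitable $L_{p}$ inequality, and then verifying that the associated support map really is a normal Jordan *-homomorphism, not merely an order isomorphism of the projection lattice. The delicate point is the passage from preserving orthogonality of projections to preserving products of the form $ePe$, which is where the Jordan (rather than associative) structure enters; handling the non-tracial compatibility of the left and right support operations, and ruling out any failure of normality when extending from $\mathcal{M}_{\tau}$ to $\mathcal{M}$, will require careful use of the $\sigma$-finiteness afforded by the semi-finite trace.
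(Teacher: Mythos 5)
The paper does not prove this statement: it is Yeadon's theorem, imported verbatim from \cite{yeadon1980isometries}, so there is no internal proof to compare against. Your outline does follow the strategy of Yeadon's published argument (a rigidity lemma forcing disjointness of supports, a support map on projections generating the Jordan morphism, $B$ and $u$ assembled from polar decompositions of images of projections), so the plan is sound in broad strokes.

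Two concrete issues. First, your sufficiency computation is not literally correct: for a Jordan $*$-isomorphism one does not have $\vert J(x)\vert^{p}=J(\vert x\vert^{p})$ in general. Writing $J=J^{1}+J^{2}$ with $J^{1}$ a homomorphism supported on a central projection $P_{1}$ and $J^{2}$ an anti-homomorphism supported on $P_{2}$ (as in Lemma \ref{lem2.5}), one gets $\vert J(x)\vert^{2}=J^{1}(x^{*}x)+J^{2}(xx^{*})$, hence $\vert BJ(x)\vert^{p}=B^{p}\bigl(J^{1}(\vert x\vert^{p})+J^{2}(\vert x^{*}\vert^{p})\bigr)$; you then need the trace identity together with $\tau(P_{2}\vert x^{*}\vert^{p})=\tau(P_{2}\vert x\vert^{p})$ (traciality plus centrality of $P_{2}$) to recover $\n x\n_{p}^{p}$. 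Second, the necessity direction as written is a plan rather than a proof: the two steps you yourself flag as hard --- the rigidity lemma ($\n x+y\n_{p}^{p}+\n x-y\n_{p}^{p}=2(\n x\n_{p}^{p}+\n y\n_{p}^{p})$ if and only if $x^{*}y=xy^{*}=0$ for $p\neq2$, proved by a differentiation argument) and the verification that the induced support map is additive, normal and Jordan-multiplicative --- are precisely the content of Yeadon's paper and are not supplied here. Also, strict convexity is not the operative property (every $L_{p}$ with $1<p<\infty$ is strictly convex, including $p=2$, and $L_{1}$ is not, yet the theorem covers $p=1$); what singles out $p\neq2$ is the equality case of the Clarkson-type inequality above. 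Finally, semi-finiteness of $\tau$ does not give $\sigma$-finiteness of $\mathcal{M}$, so the gluing of $J(e_{\alpha})$, $\vert U(e_{\alpha})\vert$ and $v_{e_{\alpha}}$ over an increasing net must rest on a compatibility argument, not on countability.
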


We now give a more precise description of the isometries of $S_{p}=\{\oplus_{n}x_{n}\esp\vert\esp x_{n}\in\mathcal{M}_{n},\sum_{n}{\rm Tr}(\vert x_{n}\vert^{p})<\infty \}$.
\begin{lem}\label{lem2.4}
The two-sided ideals of $(\oplus_{n}\mathcal{M}_{n})_{\infty}=\{\oplus_{n}x_{n}\esp\vert\esp x_{n}\in\mathcal{M}_{n},\esp \sup_{n}\vert\vert x_{n}\vert\vert<\infty\}$ are the subspaces $\oplus_{i\in I}\mathcal{M}_{i}$ for $I\subset\mathbb{N}^{*}$.
\end{lem}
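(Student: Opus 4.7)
The plan is to identify two-sided ideals of $A:=(\oplus_{n}\mathcal{M}_{n})_{\infty}$ via its center $Z(A)\cong \ell^{\infty}(\mathbb{N}^{*})$. Since $A$ is a von Neumann algebra, I interpret ``two-sided ideal'' as weak-$*$-closed two-sided ideal, which is the natural notion here (otherwise the statement fails: the $c_{0}$-direct sum is a proper norm-closed two-sided ideal not of the listed form). The ``if'' direction is immediate, since for $I\subset\mathbb{N}^{*}$ the element $z_{I}=(\mathbf{1}_{I}(n)\cdot 1_{\mathcal{M}_{n}})_{n}$ is a central projection of $A$ and $\oplus_{i\in I}\mathcal{M}_{i}=z_{I}A$.

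For the converse, let $J$ be such an ideal and put $I=\{n\in\mathbb{N}^{*}\esp\vert\esp J\cap\mathcal{M}_{n}\neq 0\}$, where $\mathcal{M}_{n}$ is embedded in $A$ as the $n$-th summand with unit $e_{n}\in A$. The matrix algebra $\mathcal{M}_{n}$ being simple, $J\cap\mathcal{M}_{n}$ is either $0$ or all of $\mathcal{M}_{n}$; hence $\mathcal{M}_{n}\subset J$ whenever $n\in I$. Conversely, for any $x=(x_{m})_{m}\in J$ the cutdown $e_{n}xe_{n}=x_{n}$ lies in $J\cap\mathcal{M}_{n}$, forcing $x_{n}=0$ whenever $n\notin I$. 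This yields the two inclusions $\oplus_{n\in I}^{\rm alg}\mathcal{M}_{n}\subset J\subset\oplus_{n\in I}\mathcal{M}_{n}$.

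To promote this to equality, observe that any $y=(y_{n})_{n\in I}\in\oplus_{n\in I}\mathcal{M}_{n}$ is the weak-$*$-limit of its truncations $z_{I\cap[1,N]}\cdot y\in\oplus_{n\in I}^{\rm alg}\mathcal{M}_{n}\subset J$, so weak-$*$-closedness of $J$ gives $y\in J$. The main (mild) subtlety is precisely this closure step; alternatively one may short-circuit the whole argument by invoking the standard correspondence between weak-$*$-closed two-sided ideals of a von Neumann algebra and central projections, applied to $Z(A)=\ell^{\infty}(\mathbb{N}^{*})$ whose projections are exactly the indicator elements $z_{I}$.
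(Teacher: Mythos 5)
Your proof is correct and rests on the same core mechanism as the paper's: cut an element of the ideal down by the central projections $e_{n}$, and use simplicity of $\mathcal{M}_{n}$ to conclude that each summand meeting the ideal nontrivially is wholly contained in it. The paper's own proof is terser: it picks a minimal $I$ with $\mathcal{A}\subset\oplus_{i\in I}\mathcal{M}_{i}$, argues $\mathcal{M}_{i}\cap\mathcal{A}=\mathcal{M}_{i}$ for each $i\in I$, and immediately concludes $\oplus_{i\in I}\mathcal{M}_{i}\subset\mathcal{A}$. You correctly flagged what this glosses over: when $I$ is infinite, that last step only places the \emph{algebraic} direct sum inside $\mathcal{A}$, and the statement is genuinely false for arbitrary two-sided ideals (your $c_{0}$-direct-sum example, or the finitely supported sequences, are valid counterexamples). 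Your added hypothesis of weak-$*$ closedness, together with the truncation-and-limit step (or the standard correspondence between $\sigma$-weakly closed ideals and central projections applied to $Z(A)\cong\ell^{\infty}(\mathbb{N}^{*})$), is the honest repair, and your explicit choice of $I$ as the support set also sidesteps the paper's unjustified appeal to a minimal $I$. It is worth noting that in the only place the lemma is used (Proposition \ref{prp2.6}), the ideal in question is $J(\mathcal{M}_{n})$, which is finite-dimensional, so $I$ is finite there and the closure issue never actually bites; that is presumably why the paper's proof does not address it.
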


\begin{proof}
Let $I\subset\mathbb{N}^{*}$. It is clear that $\oplus_{i\in I}\mathcal{M}_{i}$ is a two-sided ideal of $\mathcal{M}$.\\

If $\mathcal{A}$ is a nonull two-sided ideal of $\mathcal{M}$, let $I\subset\mathbb{N}^{*}$ be a minimal set such that $\mathcal{A}\subset\oplus_{i\in I}\mathcal{M}_{i}$ and let $i\in I$. Then $\mathcal{M}_{i}\cap\mathcal{A}$ is a non-zero two-sided ideal of $\mathcal{M}_{i}$, so $\mathcal{M}_{i}\cap\mathcal{A}=\mathcal{M}_{i}$. Thus $\oplus_{i\in I}\mathcal{M}_{i}\subset\mathcal{A}$.
\end{proof}

\begin{lem}\label{lem2.5}
If $\mathcal{N}$ is a von Neumann algebra, $J$ a Jordan homomorphism on $\mathcal{N}$, and $\mathcal{A}$ a two-sided ideal of $\mathcal{N}$, then $J(\mathcal{A})$ is a two-sided ideal in $J(\mathcal{N})$.
\end{lem}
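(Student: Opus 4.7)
The plan is to invoke the classical structure theorem for Jordan homomorphisms of a von Neumann algebra: there exists a central projection $p \in \mathcal{N}$ such that, setting $e := J(p)$ (a central element of $J(\mathcal{N})$), the map $J_{1}(\cdot) := J(\cdot)e$ is an associative homomorphism of $\mathcal{N}$ and $J_{2}(\cdot) := J(\cdot)(1-e)$ is an anti-homomorphism, with $J = J_{1} + J_{2}$. For the normal Jordan $*$-isomorphisms produced by Yeadon's theorem (the only case used in the sequel) this is classical (Jacobson--Rickart, St\o rmer). Granted this splitting, the lemma reduces to a short computation.

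Fix $a \in \mathcal{A}$ and $x \in \mathcal{N}$. Since $e$ is central in $J(\mathcal{N})$ and $e(1-e)=0$, the two cross products vanish and
\[
J(x)J(a) \;=\; J_{1}(x)J_{1}(a) + J_{2}(x)J_{2}(a) \;=\; J(xa)\,e + J(ax)\,(1-e).
\]
I would realise this as $J(a')$ for an element of $\mathcal{A}$ by taking
\[
a' \;:=\; p\,xa + (1-p)\,ax,
\]
which lies in $\mathcal{A}$ because $xa, ax \in \mathcal{A}$ and $\mathcal{A}$ is two-sided. The identity $J(py) = eJ(y)$, valid for every $y \in \mathcal{N}$, follows from the Jordan relation $J(py+yp) = J(p)J(y) + J(y)J(p)$ together with $py+yp = 2py$ and the centrality of $e$ in $J(\mathcal{N})$. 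Applying it to $y = xa$ and $y = ax$ yields $J(a') = eJ(xa) + (1-e)J(ax) = J(x)J(a)$, so $J(x)J(a) \in J(\mathcal{A})$. A symmetric choice $a'' := p\,ax + (1-p)\,xa$ handles $J(a)J(x)$ in exactly the same way, giving the full two-sided ideal property.

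The main obstacle is securing the structure theorem in a form strong enough that the separating projection $e$ actually lies in $J(\mathcal{N})$ rather than only in its bicommutant; it is this strength that makes the preimage construction $a', a'' \in \mathcal{A}$ available. Without it, the Jordan identity alone only gives that $J(\mathcal{A})$ is a Jordan ideal in $J(\mathcal{N})$, namely $J(x)J(a) + J(a)J(x) = J(xa+ax) \in J(\mathcal{A})$, which is insufficient to conclude the associative statement. Normality of $J$ in the context of the paper ensures the relevant projections descend to $\mathcal{N}$, so no further subtleties arise.
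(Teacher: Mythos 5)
Your proof is correct and takes essentially the same route as the paper: both rest on the Jacobson--Rickart/St\o rmer decomposition of $J$ into a homomorphism plus an anti-homomorphism cut out by central projections, and both conclude by exhibiting an explicit preimage in $\mathcal{A}$. The paper writes the preimage as $P_{1}bacP_{1}+P_{2}cabP_{2}$ for the three-fold product $J(b)J(a)J(c)$, while you use $p\,xa+(1-p)\,ax$ for the one-sided products; this is the same mechanism, only packaged differently.
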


\begin{proof}
Recall from \cite{stormer1965jordanmorphisms} that $J=J^{1}+J^{2}$ with $J^{1}$ an algebra homomorphism and $J^{2}$ an algebra anti-homomorphism. More precisely, there exists two central projections $P_{1}, P_{2}\in\mathcal{N}$ such that $J^{1}(x)=J(P_{1}x)$ and $J^{2}(x)=J(P_{2}x)$ for all $x\in\mathcal{N}$. We also have $P_{1}P_{2}=0$. This implies that $J^{1}(x)J^{2}(y)=0$ for all $x, y\in\mathcal{N}$. It follows that, for $J(a)\in J(\mathcal{A})$ and $J(b), J(c)\in J(\mathcal{N})$, we have
\begin{displaymath}
\begin{split}
J(b)J(a)J(c)&=J(b)J^{1}(a)J(c)+J(b)J^{2}(a)J(c)\\
&=J^{1}(b)J^{1}(a)J^{1}(c)+J^{2}(b)J^{2}(a)J^{2}(c)\\
&=J^{1}(bac)+J^{2}(cab)\\
&=J(P_{1}bacP_{1}+P_{2}cabP_{2})\in J(\mathcal{A}).
\end{split}
\end{displaymath}
Hence $J(\mathcal{A})$ is a two-sided ideal of $J(\mathcal{N})$.
\end{proof}

\begin{prp}\label{prp2.6}
Let $U\in O(S_{p})$. There exist bijective isometries $U_{n}$ of $\mathcal{M}_{n}$ such that $U=\oplus_{n}U_{n}$. More precisely, there exist sequences $(u_{n})$, $(v_{n})$ of unitaries in $\mathcal{M}_{n}$ such that, for every $n$, we have
$$U_{n}(x)=u_{n}xv_{n}\textrm{ or }U_{n}(x)=u_{n}(^{t}x) v_{n}\textrm{ for all  }x\in S_{p}.$$
\end{prp}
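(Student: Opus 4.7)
The plan is to apply Yeadon's description of isometries (Theorem \ref{thm2.3}) to $U\in O(L_{p}(\mathcal{M}))$ with $\mathcal{M}=(\oplus_{n}\mathcal{M}_{n})_{\infty}$, and then to unpack each of the three pieces in the Yeadon decomposition $U(x)=uBJ(x)$ using the block structure of $\mathcal{M}$. Observe first that the spectral projections of $B$ commute with $\mathcal{M}$, so $B$ is affiliated with the center $Z(\mathcal{M})\cong\ell^{\infty}(\mathbb{N}^{*})$, and the unitary $u$ is block-diagonal, $u=\oplus_{n}u_{n}$ with each $u_{n}\in\mathcal{M}_{n}$ unitary.

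First I would show that $J$ preserves the block decomposition. By Lemma \ref{lem2.4}, the minimal non-zero two-sided ideals of $\mathcal{M}$ are exactly the blocks $\mathcal{M}_{n}$. Applying Lemma \ref{lem2.5} to both $J$ and $J^{-1}$ shows that $J$ induces a bijection on the lattice of two-sided ideals, hence permutes the minimal ones. Since $J$ restricts to a Jordan *-isomorphism $\mathcal{M}_{n}\to J(\mathcal{M}_{n})$, and such an isomorphism forces the two matrix algebras to have the same dimension, the fact that the $\mathcal{M}_{n}$ are of pairwise distinct dimensions $n^{2}$ gives $J(\mathcal{M}_{n})=\mathcal{M}_{n}$ for every $n$. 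Each restriction $J|_{\mathcal{M}_{n}}$ is then a Jordan *-automorphism of $\mathcal{M}_{n}$; combining Størmer's decomposition (already used in Lemma \ref{lem2.5}) with the fact that every *-automorphism of $\mathcal{M}_{n}$ is inner, one obtains a unitary $w_{n}\in\mathcal{M}_{n}$ with
$$J(x)=w_{n}xw_{n}^{*}\quad\text{or}\quad J(x)=w_{n}({}^{t}x)w_{n}^{*},\qquad x\in\mathcal{M}_{n}.$$

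Next I would eliminate $B$. Being positive, affiliated with $Z(\mathcal{M})\cong\ell^{\infty}(\mathbb{N}^{*})$, and of full support, $B$ acts as a strictly positive scalar $\lambda_{n}>0$ on each block. Both possible forms of $J|_{\mathcal{M}_{n}}$ preserve the standard trace $\mathrm{Tr}_{n}$ on $\mathcal{M}_{n}$, so restricting the identity $\tau(x)=\tau(B^{p}J(x))$ from Yeadon's theorem to $x\in\mathcal{M}_{n}^{+}$ yields $\mathrm{Tr}_{n}(x)=\lambda_{n}^{p}\,\mathrm{Tr}_{n}(x)$ for all such $x$, forcing $\lambda_{n}=1$ and hence $B=1$. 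Combining, $U(x)=uJ(x)=\oplus_{n}U_{n}(x_{n})$ with
$$U_{n}(x_{n})=(u_{n}w_{n})\,x_{n}\,w_{n}^{*}\quad\text{or}\quad U_{n}(x_{n})=(u_{n}w_{n})\,({}^{t}x_{n})\,w_{n}^{*},$$
which is exactly the form stated, after relabelling $(u_{n}w_{n},w_{n}^{*})$ as the pair of unitaries.

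The step I expect to be the main obstacle is the argument that $J$ descends to each individual block $\mathcal{M}_{n}$. It rests on assembling Lemmas \ref{lem2.4} and \ref{lem2.5} with the dimension obstruction ruling out Jordan *-isomorphisms between matrix algebras of different sizes; the remaining pieces (inner-ness of *-automorphisms of $\mathcal{M}_{n}$, triviality of $B$ via the trace identity) are then largely routine consequences of Yeadon's theorem.
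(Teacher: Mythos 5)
Your proof is correct and follows essentially the same route as the paper: Yeadon's decomposition, block-diagonality of $u$ and centrality of $B$, and the use of Lemmas \ref{lem2.4} and \ref{lem2.5} to show $J(\mathcal{M}_{n})=\mathcal{M}_{n}$ by a dimension count, before invoking the known form of isometries of $\mathcal{M}_{n}$. You are somewhat more explicit than the paper in deducing $\lambda_{n}=1$ from the trace identity and in writing out $J|_{\mathcal{M}_{n}}$ via Størmer's decomposition, but the argument is the same.
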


\begin{proof}
Set $\mathcal{M}=(\oplus_{n}\mathcal{M}_{n})_{\infty}$. Let $U\in O(S_{p})$. By Theorem \ref{thm2.3}, we know that $U$ admits a Yeadon decomposition $U(x)=uBJ(x)$ for all $x\in \mathcal{M}\cap L_{p}(\mathcal{M})$.\\

Since $u\in\mathcal{M}$, we have $u=\oplus_{n}{u_{n}}$ for some $u_{n}\in\mathcal{M}_{n}$ for all $n$. Let $v$ be the inverse of $u$. Then $v\in\mathcal{M}$ since $v=u^{-1}=u^{*}$. Moreover, $v=\oplus_{n}v_{n}$, with $v_{n}=u_{n}^{*}$, and the relation $uv=1$ implies that $u_{n}v_{n}=1$ for all $n$. It follows that the $u_{n}$'s are unitaries. \\

Since $B$ commutes with $\mathcal{M}$ and since the support of $B$ is $s(B)=1$, there exist non-zero $(\lambda_{n})_{n}$ such that $B=\sum_{n}\lambda_{n}1$.\\

By Lemma \ref{lem2.5}, the Jordan isomorphism $J$ preserves the two-sided ideals. Then, for $n\geq 1$, $J(\mathcal{M}_{n})$ is an ideal of $\mathcal{M}$, and $J(\mathcal{M}_{n})=\oplus_{i\in I}\mathcal{M}_{i}$ for $I\subset\mathbb{N}^{*}$ by Lemma \ref{lem2.4}. If $i\in I$, then $i\leq n$ for dimension reasons, and $J^{-1}(\mathcal{M}_{i})$ is an ideal of $\mathcal{M}_{n}$, so $J^{-1}(\mathcal{M}_{i})=\{0\}$ or $J^{-1}(\mathcal{M}_{i})=\mathcal{M}_{n}$. Then $J(\mathcal{M}_{n})=\mathcal{M}_{n}$, $U(x)=\oplus_{n}\lambda_{n}u_{n}J(x_{n})$ for all $x=(x_{n})_{n}\in\mathcal{M}$. It is well known that isometries of $O(\mathcal{M}_{n})$ are of the form given in \ref{prp2.6}.
\end{proof}

\begin{prp}\label{prp2.7}
We have the following decomposition $O(L_{p}\oplus^{p} S_{p})=O(L_{p})\oplus O(S_{p}) $.
\end{prp}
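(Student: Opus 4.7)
The plan is to realize $L_p \oplus^p S_p$ as $L_p(\mathcal{M})$, where $\mathcal{M} := L^\infty([0,1]) \oplus (\oplus_n \mathcal{M}_n)_\infty$ is equipped with the semi-finite trace obtained as the direct sum of the Lebesgue integral on $[0,1]$ and the canonical traces on the matrix blocks. Given $U \in O(L_p \oplus^p S_p)$, I would apply Yeadon's Theorem~\ref{thm2.3} to obtain a decomposition $U(x) = u B J(x)$, with $J$ a normal Jordan $*$-isomorphism of $\mathcal{M}$, $B$ a positive operator affiliated with $\mathcal{M}$ whose spectral projections commute with $\mathcal{M}$, and $u \in \mathcal{M}$ a unitary. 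The strategy is then to show that every ingredient of this decomposition respects the splitting $\mathcal{M} = L^\infty([0,1]) \oplus (\oplus_n \mathcal{M}_n)_\infty$.

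The crux is to prove that $J(L^\infty([0,1])) = L^\infty([0,1])$ and $J((\oplus_n \mathcal{M}_n)_\infty) = (\oplus_n \mathcal{M}_n)_\infty$. For this, I would observe that the minimal projections of $\mathcal{M}$ are exactly the rank-one projections inside the matrix summand, and that $(\oplus_n \mathcal{M}_n)_\infty$ coincides with the $\sigma$-weak closure of the $*$-subalgebra they generate. Since a normal Jordan $*$-isomorphism sends minimal projections to minimal projections and is $\sigma$-weakly continuous, $J$ must preserve $(\oplus_n \mathcal{M}_n)_\infty$; the complementary central summand $L^\infty([0,1])$ is then preserved automatically, as $J$ maps the complementary central projection to itself.

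Once this is established, $u = u_1 + u_2$ splits into unitaries in each summand, and $B$, which commutes with $\mathcal{M}$, lies in the center $L^\infty([0,1]) \oplus \ell^\infty(\mathbb{N})$ and splits as $B = B_1 + B_2$, with each $B_i$ positive, affiliated with the corresponding factor, and of full support. Writing $J = J_1 \oplus J_2$ accordingly, one gets $U(x_1 + x_2) = u_1 B_1 J_1(x_1) + u_2 B_2 J_2(x_2)$, so that $U = U_1 \oplus U_2$ with $U_1 \in O(L_p([0,1]))$ and $U_2 \in O(S_p)$; the reverse inclusion is immediate since the $\ell^p$-direct sum norm is preserved by block-diagonal isometries. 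The main obstacle in the whole argument is the preservation of the direct-sum decomposition by $J$: a priori a Jordan $*$-isomorphism could mix the atomic type-I summand with the diffuse commutative summand, and the above characterization of the atomic part in terms of minimal projections is what rules this out.
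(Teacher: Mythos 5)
Your proposal is correct and follows essentially the same route as the paper: reduce via Yeadon's theorem to the Jordan $*$-isomorphism $J$ of $L^{\infty}\oplus(\oplus_{n}\mathcal{M}_{n})_{\infty}$, and show $J$ preserves the two central summands because the minimal projections are exactly the rank-one projections of the atomic summand and generate it. The additional details you supply (normality/$\sigma$-weak continuity of $J$, and the splitting of $u$ and $B$) are precisely what the paper's proof leaves implicit.
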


\begin{proof}
By Theorem \ref{thm2.3}, it suffices to prove such a decomposition on a Jordan isomorphism $J$ of the von Neumann algebra $\mathcal{N}=L^{\infty}\oplus(\oplus_{n}\mathcal{M}_{n})_{\infty}$.\\

Recall that a projection $P$ in a von Neumann algebra $\mathcal{N}$ is said to be minimal if there is no projection $Q$ in $\mathcal{N}$ such that $0<Q<P$. Since a Jordan morphism preserves the projections and the order on the set of projections, the isomorphism $J$ preserves the minimal projections.\\

Clearly, the minimal projections of $L^{\infty}\oplus(\oplus_{n}\mathcal{M}_{n})_{\infty}$ are the rank one projections in $(\oplus_{n}\mathcal{M}_{n})_{\infty}$ and they generate the algebra $(\oplus_{n}\mathcal{M}_{n})_{\infty}$. Then we have $J((\oplus_{n}\mathcal{M}_{n})_{\infty})\subset(\oplus_{n}\mathcal{M}_{n})_{\infty}$, and the same argument for $J^{-1}$ gives the equality $J((\oplus_{n}\mathcal{M}_{n})_{\infty})=(\oplus_{n}\mathcal{M}_{n})_{\infty}$. Since $J$ is an isomorphism of $\mathcal{N}$, we have also $J(L^{\infty})=L^{\infty}$.
\end{proof}

Let $1\leq p,q<\infty$. Let $\mathcal{M}$ be a von Neumann algebra. Now we recall briefly how to get a representation on the non-commutative space $L_{q}(\mathcal{M})$ from a representation on $L_{p}(\mathcal{M})$. For an operator $a\in L_{p}(\mathcal{M})$, let $\alpha\vert a\vert$ be its polar decomposition. The map
\begin{displaymath}
\begin{split}
M_{p,q}:&L_{p}(\mathcal{M})\rightarrow L_{q}(\mathcal{M})\\
               &x=\alpha\vert a\vert\mapsto\alpha\vert a\vert^{\frac{p}{q}}
\end{split}
\end{displaymath}
is called the Mazur map.

\begin{thm}\label{thm2.9}
Let $G$ be a topological group. Let $1\leq p,q<\infty$ and $\mathcal{M}$ a semi-finite von Neumann algebra. Let $\pi^{p}$ be an orthogonal representation of a group $G$ on $L_{p}(\mathcal{M})$ such that :
$$\pi^{p}(g)(x)=u_{g}B_{g}J_{g}(x)\textrm{ for all }x\in L_{p}(\mathcal{M})\textrm{ and all }g\in G,$$
where $u_{g}$, $B_{g}$, and $J_{g}$ are the elements of the Yeadon decomposition of $\pi(g)$.\\
(i) Then the maps $\pi^{q}(g)=M_{p,q}\circ\pi^{p}(g)\circ M_{q,p}$ define an orthogonal representation $\pi^{q}$ of $G$ on $L_{q}(\mathcal{M})$ and we have the following decomposition for $\pi^{q}$ :
$$\pi^{q}(g)(x)=u_{g}B_{g}^{\frac{p}{q}}J_{g}(x)\textrm{ for all }x\in L_{q}(\mathcal{M})\textrm{ and all }g\in G.$$
(ii) The following relations hold for all $g_{1}, g_{2}\in G$ and all $x\in L_{p}(\mathcal{M})$,
\begin{displaymath}
\begin{split}
&u_{g_{1}g_{2}}=u_{g_{1}}J_{g_{1}}(u_{g_{2}}),\\
&B_{g_{1}g_{2}}=B_{g_{1}}J_{g_{1}}(B_{g_{2}}),\\
&J_{g_{1}g_{2}}(x)=J_{g_{1}}^{1}(J_{g_{2}}(x))+J_{g_{1}}^{2}(u_{g_{2}}J_{g_{2}}(x)u_{g_{2}}^{*}).
\end{split}
\end{displaymath}
\end{thm}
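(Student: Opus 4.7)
My plan is to reduce both parts to the uniqueness in Yeadon's theorem (Theorem~\ref{thm2.3}), via explicit computations.

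For (i), I first establish the intertwining identity
$$ M_{p,q}\bigl(uBJ(x)\bigr) \;=\; u\, B^{p/q}\, J\bigl(M_{p,q}(x)\bigr), \qquad x\in L_p(\mathcal{M}), $$
valid for any Yeadon triple $(u,B,J)$. To prove it I write the polar decomposition $x=v|x|$, use that a Jordan $*$-isomorphism preserves polar decomposition, absolute value and functional calculus on positive elements (hence $J(x)=J(v)J(|x|)$ and $J(|x|^{p/q})=J(|x|)^{p/q}$), and that $B$ commutes with $\mathcal{M}$; this gives the polar decomposition $uBJ(x)=uJ(v)\cdot B J(|x|)$, to which the Mazur map applies cleanly. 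Setting $\pi^q(g):=M_{p,q}\circ\pi^p(g)\circ M_{q,p}$, the identity delivers $\pi^q(g)(y)=u_gB_g^{p/q}J_g(y)$. This formula defines an isometry of $L_q(\mathcal{M})$ by the ``if'' direction of Yeadon's theorem, since $B_g^{p/q}$ retains positivity, affiliation, full support and spectral projections in $\mathcal{M}$, and the trace condition is preserved: $\tau(x)=\tau(B_g^pJ_g(x))=\tau((B_g^{p/q})^qJ_g(x))$. The group law follows from $M_{q,p}\circ M_{p,q}=\mathrm{id}$, and strong continuity from that of $\pi^p$ and of the Mazur map.

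For (ii), I start from $\pi^p(g_1g_2)=\pi^p(g_1)\circ\pi^p(g_2)$ and expand the right-hand side in Yeadon form:
$$ \pi^p(g_1)\pi^p(g_2)(x) \;=\; u_{g_1}B_{g_1}J_{g_1}\bigl(u_{g_2}B_{g_2}J_{g_2}(x)\bigr). $$
I then use the splitting $J_{g_1}=J_{g_1}^1+J_{g_1}^2$ from Lemma~\ref{lem2.5}, where $J^1$ is a $*$-homomorphism, $J^2$ a $*$-anti-homomorphism, and the ranges annihilate each other (since they sit on orthogonal central projections). Distributing $J_{g_1}$, using $J^2(abc)=J^2(c)J^2(b)J^2(a)$, and moving $B$'s through elements of $\mathcal{M}$ via their commutation, I aim to rewrite the right-hand side as $u'B'J'(x)$ with
$$ u'=u_{g_1}J_{g_1}(u_{g_2}), \quad B'=B_{g_1}J_{g_1}(B_{g_2}), \quad J'(x)=J_{g_1}^1(J_{g_2}(x))+J_{g_1}^2\bigl(u_{g_2}J_{g_2}(x)u_{g_2}^*\bigr). $$
The twisted conjugation by $u_{g_2}$ inside $J'$ appears precisely when pulling the leftover factor $J^2(u_{g_2})$ back inside the anti-homomorphism via $J^2(a)J^2(b)=J^2(ba)$ together with $u_{g_2}^*u_{g_2}=1$.

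It then remains to verify that $(u',B',J')$ is a valid Yeadon triple, after which the uniqueness statement in Theorem~\ref{thm2.3} forces the three announced formulas. Unitarity of $u'$ comes from
$$ J_{g_1}(u_{g_2})^*J_{g_1}(u_{g_2})\;=\;J_{g_1}^1(u_{g_2}^*u_{g_2})+J_{g_1}^2(u_{g_2}u_{g_2}^*)\;=\;J_{g_1}(1)\;=\;1, $$
where cross terms vanish by the orthogonality of ranges. Positivity and the spectral/trace properties of $B'$ follow since $B_{g_1}$ commutes with $J_{g_1}(B_{g_2})$ (both affiliated with $\mathcal{M}$ with spectral projections in $\mathcal{M}$), and $J'$ is a Jordan $*$-isomorphism because both summands are Jordan $*$-morphisms with orthogonal ranges and $\mathrm{Ad}(u_{g_2})$ is a $*$-automorphism of $\mathcal{M}$. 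The main technical obstacle is the careful handling of the unbounded positive operator $B_{g_2}$ when manipulating $J_{g_1}(u_{g_2}B_{g_2}J_{g_2}(x))$; I will resolve this by working throughout on the dense subalgebra $\mathcal{M}\cap L_p(\mathcal{M})$, where all products are bounded and the commutation of $B_{g_2}$ with $\mathcal{M}$ lets one freely reorder the factors, then extending by continuity.
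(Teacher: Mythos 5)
Your strategy coincides with the paper's: part (i) by conjugating through the Mazur map and an intertwining identity, part (ii) by expanding $\pi^{p}(g_{1})\pi^{p}(g_{2})(x)$, splitting $J_{g_{1}}=J_{g_{1}}^{1}+J_{g_{1}}^{2}$, commuting the $B$'s through $\mathcal{M}$, rewriting the composite as $u'B'J'(x)$ and invoking uniqueness in Yeadon's theorem. Your part (ii) is essentially a transcription of the paper's computation (the paper works with positive $x$ and leaves the verification that $(u',B',J')$ is a legitimate Yeadon triple implicit; your explicit check of unitarity of $u'$ and of the properties of $B'$ and $J'$ is correct and in fact needed to apply the uniqueness statement).

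There is one step in your part (i) that is false as stated: a Jordan $*$-isomorphism does \emph{not} preserve the polar decomposition in the sense $J(x)=J(v)J(|x|)$, nor the absolute value in the sense $|J(x)|=J(|x|)$. For the anti-homomorphic component one has $J^{2}(v|x|)=J^{2}(|x|)J^{2}(v)$ and $|J^{2}(x)|=J^{2}(|x^{*}|)=J^{2}(v|x|v^{*})$; already for $J=$ transposition on $\mathcal{M}_{2}$ these differ from $J(v)J(|x|)$ and $J(|x|)$. The correct statement is that the polar decomposition of $uBJ(x)$ is $uJ(v)\cdot B\,(J^{1}(|x|)+J^{2}(|x^{*}|))$, and after applying $M_{p,q}$ one must push the partial isometry back through the anti-homomorphic part, using $|x^{*}|^{p/q}v=v|x|^{p/q}$, to land on $uB^{p/q}J(M_{p,q}(x))$. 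So your intertwining identity is true, but its proof requires exactly the $J=J^{1}+J^{2}$ splitting you already deploy in part (ii); the shortcut ``$J$ preserves polar decomposition and absolute value'' would fail whenever $J$ has a nontrivial anti-homomorphic component. Once this is repaired, the rest of (i) (the ``if'' direction of Yeadon's theorem for $B_{g}^{p/q}$, the group law via $M_{q,p}\circ M_{p,q}=\mathrm{id}$, continuity via the local uniform continuity of the Mazur map) is sound and matches what the paper does, which for (i) simply defers to a computation in a cited remark.
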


\begin{proof}
(i) The claim follows from a straightforward computation, using the structure of Jordan isomorphisms of von Neumann algebras (for details see Remark 3.3 in \cite{Olivier12TLp}).\\

(ii) Let $u\in\mathcal{M}$ be a unitary, $B$ a positive operator commuting with $\mathcal{M}$, and $y$ a positive element in $L_{p}(\mathcal{M})$. Let $J$ be a Jordan isomorphism of $\mathcal{M}$ and decompose $J=J^{1}+J^{2}$ with $J^{1}$ an algebra homomorphism and $J^{2}$ an algebra anti-homomorphism, as in Lemma \ref{lem2.5}. Then we have 
\begin{displaymath}
\begin{split}
J(uBy)&=J^{1}(uBy)+J^{2}(uyu^{*}Bu)\\
&=J^{1}(u)J^{1}(B)J^{1}(y)+J^{2}(u)J^{2}(B)J^{2}(uyu^{*})\\
&=J(u)J(B)(J^{1}(y)+J^{2}(uyu^{*})).
\end{split}
\end{displaymath}
Let $g_{1}, g_{2}\in G$ and $x$ is a positive element in $L_{p}(\mathcal{M})$. Then, using the computation above and the fact that the $B_{g}$'s commute with $\mathcal{M}$, we have
\begin{displaymath}
\begin{split}
\pi_{g_{1}}(\pi_{g_{2}}(x))&=u_{g_{1}}B_{g_{1}}J_{g_{1}}(u_{g_{2}}B_{g_{2}}J_{g_{2}}(x))\\
&=u_{g_{1}}J_{g_{1}}(u_{g_{2}})B_{g_{1}}J_{g_{1}}(B_{g_{2}})(J_{g_{2}}^{1}(x)+J_{g_{2}}^{2}(u_{g_{2}}xu_{g_{2}}^{*})).
\end{split}
\end{displaymath}
The claim follows from the unicity of the Yeadon decomposition in Theorem \ref{thm2.3}, and the morphism property for $\pi$. 
\end{proof}

Let $G$ be a topological group, $\mathcal{M}$ a von Neumann algebra, and $1\leq p<\infty$, $p\neq2$. Let $\pi^{p}:G\rightarrow O(L_{p}(\mathcal{M}))$ be an orthogonal representation of $G$. We denote by $$L_{p}(\mathcal{M})^{\pi^{p}(G)}=\{x\in L_{p}(\mathcal{M})\vert\esp\forall g\in G,\esp\pi^{p}(g)(x)=x\esp\}$$
the set of $\pi^{p}(G)$-invariant vectors. Let $p'$ be the conjugate exponent of $p$, and let $(\pi^{p}){'}$ be the contragradient representation of $\pi^{p}$ on $L_{p'}(\mathcal{M})$. 
We also denote by
$$L_{p}'=\{\esp x\in L_{p}(\mathcal{M})\esp\vert\esp\forall y\in L_{p'}^{(\pi^{p}){'}(G)},\esp\tau(xy)=0\esp\}$$
the $\pi(G)$-invariant complement of $L_{p}(\mathcal{M})^{\pi(G)}$ (see Proposition 2.6 in \cite{bader2007propertyTLp} for more details on this decomposition). The duality map $\ast:S(L_{p}(\mathcal{M}))\rightarrow S(L_{p'}(\mathcal{M}))$ between the unit spheres of $L_{p}(\mathcal{M})$ and $L_{p'}(\mathcal{M})$ is given by 
$$\ast x=M_{p,p'}(x)^{*}\textrm{ for all }x\in S(L_{p}).$$
Moreover, for all $g\in G$ and all $x\in S(L_{p'}(\mathcal{M}))$, we have 
$$(\pi^{p}){'}(g)x=\ast\circ\pi^{p}(g)\circ\ast^{-1}(x).$$
Hence we get the following description of the Yeadon decomposition of $(\pi^{p}){'}(g)$.
\begin{prp}\label{prp-contragradient}
Let $g\in G$. Let $\pi^{p}(g)=u_{g}B_{g}J_{g}$ be the Yeadon decomposition of $\pi^{p}(g)$. Then the Yeadon decomposition of $(\pi^{p}){'}(g)$ is given by
$$(\pi^{p}){'}(g)x=u_{g}^{*}B_{g}^{\frac{p}{p'}}u_{g}J_{g}(x)u_{g}^{*}\textrm{ for all }x\in L_{p}(\mathcal{M}).$$
\end{prp}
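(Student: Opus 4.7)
The plan is to reduce the computation to the induced representation $\pi^{p'}$ on $L_{p'}(\mathcal{M})$, whose Yeadon decomposition is already given by Theorem \ref{thm2.9}. The key preliminary observation, obtained by a direct polar-decomposition calculation, is that the duality map admits the equivalent form $\ast y = M_{p,p'}(y^{*})$ for $y \in L_{p}(\mathcal{M})$: if $y = w|y|$ is the polar decomposition, then the polar decomposition of $y^{*}$ is $y^{*} = w^{*}\cdot (w|y|w^{*})$, so $M_{p,p'}(y^{*}) = w^{*}(w|y|w^{*})^{p-1} = |y|^{p-1}w^{*} = M_{p,p'}(y)^{*} = \ast y$. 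Applying the analogous identity on the other side yields $\ast^{-1}(x) = M_{p',p}(x^{*})$ for $x \in L_{p'}(\mathcal{M})$.

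Combining this with the intertwining $\pi^{p}(g)\circ M_{p',p} = M_{p',p}\circ \pi^{p'}(g)$ (an immediate consequence of Theorem \ref{thm2.9}) and the definition $(\pi^{p})'(g) = \ast\circ \pi^{p}(g)\circ \ast^{-1}$, I obtain for every $x \in L_{p'}(\mathcal{M})$
\[
(\pi^{p})'(g)(x) \;=\; \ast\bigl(M_{p',p}(\pi^{p'}(g)(x^{*}))\bigr) \;=\; \bigl(\pi^{p'}(g)(x^{*})\bigr)^{*},
\]
using that $\ast \circ M_{p',p} = (\cdot)^{*}$ on $L_{p'}(\mathcal{M})$. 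Now Theorem \ref{thm2.9} expresses $\pi^{p'}(g)(x^{*}) = u_{g}B_{g}^{p/p'}J_{g}(x^{*}) = u_{g}B_{g}^{p/p'}J_{g}(x)^{*}$, the last equality using that $J_{g}$ is a $\ast$-Jordan isomorphism. Taking adjoints and exploiting the self-adjointness of $B_{g}$ yields $(\pi^{p})'(g)(x) = J_{g}(x)B_{g}^{p/p'}u_{g}^{*}$. Since the spectral projections of $B_{g}$ lie in the centre of $\mathcal{M}$, $B_{g}^{p/p'}$ commutes with both $J_{g}(x) \in \mathcal{M}$ and $u_{g}$; inserting $u_{g}u_{g}^{*} = 1$ on the left rewrites the expression as $u_{g}^{*}B_{g}^{p/p'}u_{g}J_{g}(x)u_{g}^{*}$, the announced formula. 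This presentation simultaneously exhibits the Yeadon decomposition of $(\pi^{p})'(g)$: unitary $u_{g}^{*}$, positive operator $B_{g}^{p/p'}$, and Jordan $\ast$-isomorphism $x \mapsto u_{g}J_{g}(x)u_{g}^{*}$, which is Jordan as the composition of $J_{g}$ with the algebra $\ast$-automorphism $\mathrm{Ad}(u_{g})$.

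The only delicate point will be the first identity $\ast y = M_{p,p'}(y^{*})$, which requires careful bookkeeping of partial isometries in the non-commutative polar decomposition, in particular the verification that $y = w|y|$ implies that the polar decomposition of $y^{*}$ has partial isometry part $w^{*}$ and positive part $w|y|w^{*}$. Beyond this, the argument is routine algebraic manipulation relying on the $\ast$-Jordan property of $J_{g}$, the self-adjointness of $B_{g}$, and the commutation of $B_{g}$ with $\mathcal{M}$ granted by Yeadon's theorem.
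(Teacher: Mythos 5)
Your argument is correct and takes essentially the same route the paper intends: the proposition is stated there as a direct consequence of the duality-map formula $\ast x=M_{p,p'}(x)^{*}$ and the relation $(\pi^{p})'(g)=\ast\circ\pi^{p}(g)\circ\ast^{-1}$, with the computation left to the reader, and your derivation (the identity $\ast y=M_{p,p'}(y^{*})$ via the polar decomposition of $y^{*}$, the reduction to $\pi^{p'}$ through Theorem \ref{thm2.9}, and the commutation of $B_{g}$ with $\mathcal{M}$) supplies exactly those missing details.
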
 
The following proposition is an immediate consequence of the local uniform continuity of the Mazur map.
\begin{prp}\label{prp-conjugatepi}
If $\pi^{p}$ has almost invariant vectors in $L_{p}'$, then its conjugate by the Mazur map $\pi^{q}=M_{p,q}\circ\pi^{p}\circ M_{q,p}$ has almost invariant vectors in $L_{q}'$.
\end{prp}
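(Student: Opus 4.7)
The plan is to transport almost invariant vectors from $L_p'$ to $L_q'$ through the Mazur map, overcoming the fact that $M_{p,q}$ is nonlinear (hence does not a priori send $L_p'$ into $L_q'$) by combining the conjugation identity $\pi^{q}=M_{p,q}\circ\pi^{p}\circ M_{q,p}$ with a projection-and-normalization argument.

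First, I would fix a sequence of unit vectors $(\xi_n)\subset L_p'$ with $\|\pi^p(g)\xi_n-\xi_n\|_p\to 0$ uniformly on compact sets, and set $\eta_n=M_{p,q}(\xi_n)\in L_q(\mathcal{M})$. Since $M_{p,q}$ is a homeomorphism between the unit spheres that is uniformly continuous on bounded sets, the $\eta_n$ are unit vectors, and the identity $\pi^q(g)\eta_n-\eta_n=M_{p,q}(\pi^p(g)\xi_n)-M_{p,q}(\xi_n)$ together with local uniform continuity of $M_{p,q}$ yields
\[
\|\pi^q(g)\eta_n-\eta_n\|_q\to 0
\]
uniformly on compact subsets of $G$. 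The difficulty is that $\eta_n$ need not lie in $L_q'$.

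Next, invoking Proposition 2.6 of \cite{bader2007propertyTLp}, I would decompose $\eta_n=\eta_n^0+\eta_n'$ with $\eta_n^0\in L_q^{\pi^q(G)}$ and $\eta_n'\in L_q'$, with the projection of norm $1$. Since the splitting is $\pi^q(G)$-equivariant and $\pi^q$ fixes $\eta_n^0$,
\[
\|\pi^q(g)\eta_n'-\eta_n'\|_q=\|\pi^q(g)\eta_n-\eta_n\|_q\to 0
\]
uniformly on compacts. It remains to show $\liminf_n\|\eta_n'\|_q>0$, which is the main obstacle.

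I would handle this by contradiction. Suppose, along a subsequence, $\|\eta_n'\|_q\to 0$, so $\|\eta_n-\eta_n^0\|_q\to 0$ with $\|\eta_n^0\|_q\to 1$. Applying $M_{q,p}$ and using its local uniform continuity on the closed unit ball (plus $M_{q,p}(\eta_n)=\xi_n$) gives $\|\xi_n-M_{q,p}(\eta_n^0)\|_p\to 0$. Because $M_{q,p}$ conjugates $\pi^q$ into $\pi^p$, it carries $\pi^q(G)$-invariants to $\pi^p(G)$-invariants, so $M_{q,p}(\eta_n^0)\in L_p^{\pi^p(G)}$; moreover $\|M_{q,p}(\eta_n^0)\|_p=\|\eta_n^0\|_q^{q/p}\to 1$. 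Applying the norm-$1$ projection $P^p:L_p(\mathcal{M})\to L_p^{\pi^p(G)}$, which annihilates $\xi_n\in L_p'$ and fixes $M_{q,p}(\eta_n^0)$, yields $\|M_{q,p}(\eta_n^0)\|_p=\|P^p(\xi_n-M_{q,p}(\eta_n^0))\|_p\leq\|\xi_n-M_{q,p}(\eta_n^0)\|_p\to 0$, a contradiction. Hence $\|\eta_n'\|_q\geq\varepsilon>0$ eventually, and the normalized sequence $\zeta_n=\eta_n'/\|\eta_n'\|_q$ consists of unit vectors in $L_q'$ satisfying $\|\pi^q(g)\zeta_n-\zeta_n\|_q\to 0$ uniformly on compact sets, which is the desired almost invariance.
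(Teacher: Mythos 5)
Your argument is correct, and it is substantially more detailed than what the paper provides: the paper simply declares the proposition ``an immediate consequence of the local uniform continuity of the Mazur map'' and writes nothing further. The genuinely non-immediate point is exactly the one you isolate, namely that $M_{p,q}$ is nonlinear and so gives no reason for $M_{p,q}(\xi_n)$ to lie in $L_q'$; your projection-and-contradiction step (showing that if the $L_q'$-component of $\eta_n=M_{p,q}(\xi_n)$ degenerated, then transporting the invariant component $\eta_n^0$ back by $M_{q,p}$ would produce $\pi^p(G)$-invariant vectors of norm tending to $1$ at vanishing distance from $\xi_n\in L_p'$, which the projection onto $L_p^{\pi^p(G)}$ along $L_p'$ rules out) is precisely the detail the paper omits, and it is sound. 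Two small remarks: you do not actually need the projection $P^p$ to have norm $1$ (which is not obvious from the Bader--Furman--Gelander--Monod decomposition alone) --- mere boundedness, which follows from the closed graph theorem since $L_p^{\pi^p(G)}$ and $L_p'$ are closed complementary subspaces, already yields the contradiction $\|M_{q,p}(\eta_n^0)\|_p\leq\|P^p\|\,\|\xi_n-M_{q,p}(\eta_n^0)\|_p\to0$ against $\|M_{q,p}(\eta_n^0)\|_p\to1$; and the identity $\|M_{q,p}(x)\|_p=\|x\|_q^{q/p}$ together with the fact that $M_{q,p}$ intertwines $\pi^q$ and $\pi^p$ on invariant vectors is used correctly. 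So your write-up can be read as supplying the missing proof rather than an alternative to one.
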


\section{Representations on $L_{p}(\mathcal{M})$ with vanishing coefficients}

We investigate property $(H_{L_{p}(\mathcal{M})})$ and its relationships with property $(H)$ for the following von Neumann algebras : $\mathcal{M}=L^{\infty}([0,1])$, $\mathcal{M}=(\oplus_{n}\mathcal{M}_{n})_{\infty}$, $\mathcal{M}=\mathcal{B}(\mathcal{H})$, and $\mathcal{M}=l^{\infty}$.

\subsection{Property $(H_{L_{p}([0,1])})$}

In this subsection, we prove Theorem \ref{thm-Lie}. The most difficult part of this proof is the implication $(iii)\esp\Rightarrow (i)$. We mention that, even for the groups $SO(n,1)$ or $SU(n,1)$, the proof is not elementary since it depends heavily on the fact that these groups have lattices $\Gamma$ with non-trivial first Betti number, that is, lattices with infinite abelianization. The latter result was shown by Millson in \cite{Millson1976} for the case $SO(n,1)$, and by Kazhdan in \cite{Kazhdan1977} for the case $SU(n,1)$.\\

In the proof of $(iii)\esp\Rightarrow (i)$, we will need the following technical lemma which asserts that vanishing coefficients and almost invariant vectors are preserved for the quasi-regular representation, when passing from a group to a finite cover, and from the finite cover to the group.
\begin{lem}\label{lem-technicalLie}
Let $G_{1}$ and $G_{2}$ be locally compact topological groups and $p:G_{1}\rightarrow G_{2}$ be a finite covering.
\begin{enumerate}
\item Let $H_{2}$ be a closed subgroup of $G_{2}$ such that $G_{2}/H_{2}$ carries a $G_{2}$-invariant measure, and the quasi-regular representation $\lambda_{G_{2}/H_{2}}$ has almost invariant vectors and vanishing coefficients. Set $H_{1}=p^{-1}(H_{2})$. Then $\lambda_{G_{1}/H_{1}}$ has almost invariant vectors and vanishing coefficients.
\item Let $H_{1}$ be a closed subgroup of $G_{1}$ such that $G_{1}/H_{1}$ carries a $G_{1}$-invariant measure, and the quasi-regular representation $\lambda_{G_{1}/H_{1}}$ has almost invariant vectors and vanishing coefficients. Set $H_{2}=p(H_{1})$. Then $H_{2}$ is closed and $\lambda_{G_{2}/H_{2}}$ has almost invariant vectors and vanishing coefficients.
\end{enumerate}
\end{lem}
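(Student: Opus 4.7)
The plan is to use that the finite covering $p:G_1\to G_2$ is a surjective, continuous, open group homomorphism whose kernel $K=\ker p$ is a finite (hence compact) normal subgroup of $G_1$, realizing $G_2\simeq G_1/K$. In particular $p$ is proper: for any compact $C_2\subset G_2$, the preimage $p^{-1}(C_2)$ is compact. Equivalently, $g_1\to\infty$ in $G_1$ implies $p(g_1)\to\infty$ in $G_2$, and every lift of a sequence going to infinity in $G_2$ goes to infinity in $G_1$. The proof of both parts also rests on identifying $L^2(G_2/H_2)$ with the subspace $L^2(G_1/H_1)^K$ of $K$-invariant vectors via pullback along the induced map $\bar p:G_1/H_1\to G_2/H_2$, $gH_1\mapsto p(g)H_2$.

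For part (1), since $H_1=p^{-1}(H_2)$ contains $K$, the map $\bar p$ is a $G_1$-equivariant continuous bijection; it is also open, hence a homeomorphism, and pulls the $G_2$-invariant measure on $G_2/H_2$ back to a $G_1$-invariant measure on $G_1/H_1$. Under this identification $\lambda_{G_1/H_1}$ becomes $\lambda_{G_2/H_2}\circ p$, so any almost invariant sequence for $\lambda_{G_2/H_2}$ is automatically almost invariant for $\lambda_{G_1/H_1}$ (using continuity of $p$ to send compact subsets of $G_1$ into compacts of $G_2$), and vanishing coefficients transfers via properness of $p$.

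For part (2), first observe that $p^{-1}(H_2)=p^{-1}(p(H_1))=KH_1=\bigcup_{k\in K}kH_1$ is a finite union of closed cosets, hence closed; since $p$ is a quotient map, $H_2$ is closed in $G_2$. The induced $\bar p$ is now the quotient by the left translation action of $K$ on $G_1/H_1$, and pulling back along $\bar p$ (after a normalization factor accounting for the fiber size) gives a $G_1$-equivariant isometric embedding $p^\ast:L^2(G_2/H_2)\hookrightarrow L^2(G_1/H_1)$ with image exactly $L^2(G_1/H_1)^K$, intertwining $\lambda_{G_2/H_2}(p(g))$ with $\lambda_{G_1/H_1}(g)|_{L^2(G_1/H_1)^K}$. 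Given almost invariant unit vectors $(\xi_n)$ for $\lambda_{G_1/H_1}$, form the averages $\eta_n=\tfrac{1}{|K|}\sum_{k\in K}\lambda_{G_1/H_1}(k)\xi_n\in L^2(G_1/H_1)^K$; almost invariance applied to each compact set $\{k\}\subset G_1$ forces $\eta_n-\xi_n\to 0$, so $\|\eta_n\|\to 1$, while normality of $K$ permits the reindexing $k\mapsto gkg^{-1}$ within the sum to yield $\lambda_{G_1/H_1}(g)\eta_n-\eta_n=\tfrac{1}{|K|}\sum_{k\in K}\lambda_{G_1/H_1}(k)\bigl(\lambda_{G_1/H_1}(g)\xi_n-\xi_n\bigr)$, hence $\|\lambda_{G_1/H_1}(g)\eta_n-\eta_n\|\le\|\lambda_{G_1/H_1}(g)\xi_n-\xi_n\|$. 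Taking preimages under $p^\ast$, the corresponding vectors in $L^2(G_2/H_2)$ form an almost invariant sequence for $\lambda_{G_2/H_2}$, using that any compact $C_2\subset G_2$ lifts to the compact $p^{-1}(C_2)\subset G_1$. Finally, for any $\phi,\psi\in L^2(G_2/H_2)$ and any lift $g_1$ of $g_2$, the identity $\langle\lambda_{G_2/H_2}(g_2)\phi,\psi\rangle=\langle\lambda_{G_1/H_1}(g_1)p^\ast\phi,p^\ast\psi\rangle$ together with $g_2\to\infty\Rightarrow g_1\to\infty$ gives vanishing coefficients. The main delicate step is the averaging argument in part (2), where using the normality (and not centrality) of $K$ is precisely what preserves the norm bound.
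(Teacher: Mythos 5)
Your proof is correct and follows essentially the same route as the paper's: identify $L^{2}(G_{2}/H_{2})$ with the $\ker(p)$-invariant vectors of $L^{2}(G_{1}/H_{1})$ via pullback along $\overline{p}$, transfer vanishing coefficients using properness of $p$, and produce almost invariant $\ker(p)$-invariant vectors by averaging over the finite kernel. The only (harmless) differences are cosmetic: you justify closedness of $H_{2}$ explicitly and run the averaging estimate via normality of the kernel, where the paper invokes its centrality.
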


\begin{proof}
In the two cases, let $\overline{p}:G_{1}/H_{1}\rightarrow G_{2}/H_{2}$ be the map induced by the covering map $p:G_{1}\rightarrow G_{2}$. Observe that $\overline{p}$ is $G_{1}$-invariant, for the natural action of $G_{1}$ on $G_{1}/H_{1}$ and the action of $G_{1}$ on $G_{2}/H_{2}$ given by $p$ :
$$g_{1}.(g_{2}H_{2})=p(g_{1})g_{2}H_{2}\textrm{ for all }g_{1}\in G_{1}, g_{2}\in G_{2}.$$
Since $Z_{1}={\rm Ker}(p)$, the map $\overline{p}$ has finite fibers : indeed, the fiber over $p(g_{1}H_{2})$ is $\{g_{1}zH_{1}\esp\vert\esp z\in Z_{1}\esp\}=\{zg_{1}H_{1}\esp\vert\esp z\in Z_{1}\esp\}$, as $Z_{1}$ is central.

\begin{enumerate}
\item Let $\mu_{2}$ be a $G_{2}$-invariant measure on $G_{2}/H_{2}$. In this case, $Z_{1}\subset H_{1}$ and so $\overline{p}$ is bijective. Then $\mu_{1}=\overline{p}^{-1}\ast\mu_{2}$ is a $G_{1}$-invariant measure on $G_{1}/H_{1}$. The quasi-regular representation $\lambda_{G_{1}/H_{1}}$ is equivalent to the representation $\lambda_{G_{2}/H_{2}}\circ p$.\\
Since $1_{G_{2}}\prec\lambda_{G_{2}/H_{2}}$ and $\lambda_{G_{2}/H_{2}}$ has vanishing coefficients, we have $1_{G_{1}}\prec\lambda_{G_{1}/H_{1}}$ and $\lambda_{G_{1}/H_{1}}$ has vanishing coefficients.

\item Notice that $H_{2}=p(H_{1})$ is a closed subgroup of $G_{2}$ since the cover $p:G_{1}\rightarrow G_{2}$ is finite. Let $\mu_{1}$ be a $G_{1}$-invariant measure on $G_{1}/H_{1}$. Since the fibers of $\overline{p}$ are finite, we can define a $G_{2}$-invariant measure $\mu_{2}$ on $G_{2}/H_{2}$ by
$$\int_{G_{2}/H_{2}}f\esp d\mu_{2}=\int_{G_{1}/H_{1}}f\circ\overline{p}\esp d\mu_{1}\textrm{ for all }f\in C_{c}(G_{2}/H_{2}).\esp (\ast)$$
The induced mapping
\begin{displaymath}
\begin{split}
\psi:\esp&L_{2}(G_{2}/H_{2},\mu_{2})\rightarrow L_{2}(G_{1}/H_{1},\mu_{1})\\
&\esp f\esp\mapsto\esp f\circ\overline{p}
\end{split}
\end{displaymath}
is a linear isometry which intertwines the $G_{1}$-representations $\lambda_{G_{2}/H_{2}}\circ p$ and $\lambda_{G_{1}/H_{1}}$. So, $\lambda_{G_{2}/H_{2}}\circ p$ is equivalent to a subrepresentation of $\lambda_{G_{1}/H_{1}}$. Since $\lambda_{G_{1}/H_{1}}$ has vanishing coefficients, the same is true for $\lambda_{G_{2}/H_{2}}\circ p$. As $p$ is surjective and had finite kernel, it follows that the $G_{2}$-representation $\lambda_{G_{2}/H_{2}}$ has vanishing coefficients.\\

It remains to prove that $1_{G_{2}}\prec\lambda_{G_{2}/H_{2}}$. To show this
we claim that
$${\rm Im}(\psi)=L_{2}(G_{1}/H_{1})^{\lambda_{G_{1}/H_{1}}(Z_{1})},$$
the space of $Z_{1}$-invariant vectors in $L_{2}(G_{1}/H_{1})$. \\
Indeed, let $f_{1}\in L_{2}(G_{1}/H_{1})^{\lambda_{G_{1}/H_{1}}(Z_{1})}$. As mentioned above, the fiber over $p(g_{1}H_{2})$ is $\{ zg_{1}H_{1}, z\in Z_{1}\}$ for every $g_{1}\in H_{1}$. Hence, $f_{1}$ is constant on the fibers of $\overline{p}$ and there exists a map $f_{2}$ on $G_{2}/H_{2}$ such that $f_{2}\circ\overline{p}= f_{1}$. It is clear that $f_{2}\in L_{2}(G_{2}/H_{2})$ by formula $(\ast)$.\\
Conversely, if $f\in L_{2}(G_{2}/H_{2})$, it is clear that $f\circ \overline{p}$ is a $Z_{1}$-invariant function in $L_{2}(G_{1}/H_{1})$.\\

We now show that $\lambda_{G_{2}/H_{2}}$ almost has invariant vectors. It suffices to show that the restriction of $\lambda_{G_{1}/H_{1}}$ to the subspace $L_{2}(G_{1}/H_{1})^{\lambda_{G_{1}/H_{1}}(Z_{1})}$ almost has invariant vectors. Take a sequence $(v_{n})_{n}$ of almost invariant vectors for $\lambda_{G_{1}/H_{1}}$. For $n\in \mathbb{N}$, define
$$w_{n}=\frac{1}{\vert Z_{1}\vert}\sum_{z\in Z_{1}}\lambda_{G_{1}/H_{1}}(z)v_{n}.$$
For every $n\in\mathbb{N}$, $w_{n}\in L_{2}(G_{1}/H_{1})^{\lambda_{G_{1}/H_{1}}(Z_{1})}$. Moreover, for $g\in G$,
$$\n\lambda_{G_{1}/H_{1}}(g)w_{n}-w_{n} \n_{2}\leq\frac{1}{\vert Z_{1}\vert}\sum_{z\in Z_{1}}\n\lambda_{G_{1}/H_{1}}(zg)v_{n}-v_{n} \n_{2}$$
so that $\lim_{n}\sup_{g\in K}\n \lambda_{G_{1}/H_{1}}(g)w_{n}-w_{n}\n_{2}=0$ for every compact $K\subset G$.
We have 
$$\n w_{n}-v_{n}\n_{2}\leq \frac{1}{\vert Z_{1}\vert}\sum_{z\in Z_{1}}\n\lambda_{G_{1}/H_{1}}(z)v_{n}-v_{n} \n_{2}$$
and the left side of the inequality tends to $0$; hence, since $\n v_{n}\n_{2}=1$, $\lim_{n}\n w_{n}\n_{2}=1$. The sequence $(\tilde{w_{n}})_{n}$, defined by $\tilde{w_{n}}=\frac{1}{\n w_{n}\n_{2}}w_{n}$, is a sequence of almost invariant vectors for the restriction of $\lambda_{G_{1}/H_{1}}$ to\\
$L_{2}(G_{1}/H_{1})^{\lambda_{G_{1}/H_{1}}(Z_{1})}$. The lemma is proved.

\end{enumerate}
\end{proof}

We are now able to give the proof of Theorem \ref{thm-Lie}.
\begin{proof}[Proof of theorem \ref{thm-Lie}]

(i) $\Rightarrow$ (ii) : Assume that $G$ is a connected Lie group without property $(H)$. By Theorem 3.3.1 of Cornulier in \cite{Cornulier2006Kazhdanpair}, there exists a \emph{normal} subgroup $R_{T}$ in $G$ such that $G/R_{T}$ has the Haagerup property, and the pair $(G,R_{T})$ has property $(T)$. Since $G/R_{T}$ has the Haagerup property $(H)$, and $G$ does not have property $(H)$, the subgroup $R_{T}$ is non-compact. By Theorem A in \cite{bader2007propertyTLp}, the pair $(G,R_{T})$ has property $(T_{L_{p}([0,1])})$ for every $1<p<\infty$. Hence, by Remark \ref{rem-defHLp}, $G$ does not have property  $(H_{L_{p}([0,1])})$. \\

(ii) $\Rightarrow$ (iii) : This is the result from \cite{Cherix2001}, stated in Theorem 4.0.1.\\

(iii) $\Rightarrow$ (i) : We will show that $G$ admits a closed subgroup $H$ such that the quasi-regular representation $\lambda_{G/H}:G\rightarrow O(L_{2}(G/H))$ has almost invariant vectors and vanishing coefficients. Then we will conjugate this representation by the Mazur map to obtain the desired representation on $L_{p}(G/H)$.\\

Since the semi-simple part $S$ of $G$ has finite center, and since $G$ is locally isomorphic to the direct product $\prod_{i}S_{i}\times M$, using Proposition 8.1 in \cite{Cowling2005Lie}, there exists a finite covering $p:G^{\natural}\rightarrow G$ such that $G^{\natural}$ is a direct product of closed connected subgroups $\prod_{i}S_{i}^{\natural}\times M^{\natural}$, where every $S_{i}^{\natural}$ is a simple Lie group with finite center, and $M^{\natural}$ is amenable.\\

Let $i\in I$. Let $S_{i}=SO(n_{i},1)$ or $S_{i}=SU(m_{i},1)$ be such that $S_{i}$ is locally isomorphic to $S_{i}^{\natural}$. By the results in \cite{Millson1976} and \cite{Kazhdan1977}, there exists a lattice $\Gamma_{i}$ in $S_{i}$ such that $\vert\Gamma_{i}/[\Gamma_{i},\Gamma_{i}]\vert=\infty$.\\

Set $G_{1}=\prod_{i}S_{i}^{\natural}\times M^{\natural}$. We consider the closed subgroup of $G_{1}$ defined by 
$$H_{1}=\prod_{i\in I}[\Gamma_{i},\Gamma_{i}]\times \{e\}.$$ 
We claim that the quasi-regular representation $\lambda_{G_{1}/H_{1}}$ of $G_{1}$ on $L_{2}(G_{1}/H_{1})$ has almost invariant vectors and vanishing coefficients. Indeed, we have 
$$\lambda_{G_{1}/H_{1}}\simeq\otimes_{i\in I}\lambda_{S_{i}/[\Gamma_{i},\Gamma_{i}]}\otimes\lambda_{M^{\natural}},\esp (1)$$
the right hand-side being the exterior tensor product of the representations.\\

We first show that $\lambda_{G_{1}/H_{1}}$ has vanishing coefficients. Since the representation $\lambda_{M^{\natural}}$ has vanishing coefficients, it suffices to show that $\lambda_{S_{i}/[\Gamma_{i},\Gamma_{i}]}$ has vanishing coefficients for every $i\in I$. By the Howe-Moore theorem, this is the case if and only if
$$L_{2}(S_{i}/[\Gamma_{i},\Gamma_{i}])^{\lambda_{S_{i}/[\Gamma_{i},\Gamma_{i}]}(S_{1})}=\{0\}\textrm{ for all }i\in I.$$

To show this, let $i\in I$ be fixed. Since $\Gamma_{i}/[\Gamma_{i},\Gamma_{i}]$ is infinite, the space $l_{2}(\Gamma_{i}/[\Gamma_{i},\Gamma_{i}])$ does not have non-zero $\lambda_{\Gamma_{i}/[\Gamma_{i},\Gamma_{i}]}(\Gamma_{i})$-invariant vector. Since $S_{i}/\Gamma_{i}$ carries a $S_{i}$-invariant finite measure, this implies by induction (see theorem E.3.1 in \cite{bekka2008kazhdan}) that 
$$L_{2}(S_{i}/[\Gamma_{i},\Gamma_{i}])^{\lambda_{S_{i}/[\Gamma_{i},\Gamma_{i}
 ]}(S_{i})}=\{0\}$$
We have therefore proved that $\lambda_{G_{1}/H_{1}}$ has vanishing coefficients.\\

Next we show that $\lambda_{G_{1}/H_{1}}$ almost has invariant vectors. For this, it suffices to show that $\lambda_{M^{\natural}}$ and all $\lambda_{S_{i}/[\Gamma_{i},\Gamma_{i}
 ]}$ have almost invariant vectors, by formula $(1)$. Indeed, by the Hulanicki-Reiter Theorem, this is clear for $\lambda_{M^{\natural}}$ since $M^{\natural}$ is amenable. Fix $i\in I$. Since $\Gamma_{i}/[\Gamma_{i},\Gamma_{i}]$ is abelian and therefore amenable, we have $1_{\Gamma_{i}}\prec\lambda_{\Gamma_{i}/[\Gamma_{i},\Gamma_{i}]}$. Since $S_{i}/\Gamma_{i}$ has a finite $S_{i}$-invariant measure, we have also $1_{S_{i}}\prec\lambda_{S_{i}/[\Gamma_{i},\Gamma_{i}]}$. \\

Denote by $\overline{S_{i}}=PSO(n_{i},1)$ or $\overline{S_{i}}=PSU(m_{i},1)$ the quotient of $S_{i}$ by its (finite) center. Denote by $G_{2}$ and $G_{3}$ the groups 
$$G_{2}=\prod_{i\in I}\overline{S_{i}}\times M^{\natural}\textrm{ and }G_{3}=\prod_{i\in I}S_{i}^{\natural}\times M^{\natural}.$$
Observe that we have three finite covering maps $p_{1}:G_{1}\rightarrow G_{2}$, $p_{2}:G_{3}\rightarrow G_{2}$ and $p_{3}\esp(=p):G_{3}\rightarrow G$. We apply now Lemma \ref{lem-technicalLie} successively to $p_{1}$, $p_{2}$ and $p_{3}$. We obtain the existence of a closed subgroup $H$ in $G$ such that $G/H$ carries a $G$-invariant measure, and $\lambda_{G/H}$ almost has invariant vectors and has vanishing coefficients.\\

Let $\pi^{p}$ be the orthogonal representation of $G$ on $L_{p}(G/H)$ defined by the same formula as $\lambda_{G/H}$ :
$$\pi^{p}(g)f(g'H)=f(g^{-1}g'H)\textrm{ for all }f\in L_{p}(G/H)\textrm{ and }g,g'\in G.$$
By Proposition \ref{prp-conjugatepi}, $\pi^{p}$ almost has invariant vectors. Since $G/H$ carries a $G$-invariant measure, we have 
Moreover, for $x,y\in C_{c}(G/H)$, the matrix coefficient $g\mapsto<\pi^{p}(g)x,y>$ is in $C_{0}(G)$. By density of $C_{c}(G/H)$ in $L_{p}(G/H)$, the representation $\pi^{p}$ has vanishing coefficients.

\end{proof}

\subsection{Properties $(H_{S_{p}})$ and $(H_{C_{p}})$}

Now let us study $(H_{L_{p}(\mathcal{M})})$ for the two discrete von Neumann algebras $\mathcal{B}(\mathcal{H})$ and $(\oplus\mathcal{M}_{n})_{\infty}=\{ x=\oplus_{n} x_{n}\esp\vert\esp\sup_{n}\n x_{n}\n<\infty\esp\}$.

\begin{thm}\label{thm-H=HCp}
Let $1\leq p<\infty$, $p\neq2$. Let $G$ be a locally compact second countable group. Then the following properties are equivalent.\\
(i) $G$ has property $(H_{C_{p}})$.\\
(ii) $G$ has property $(H)$.
\end{thm}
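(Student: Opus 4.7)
My strategy is to prove the two implications separately, with $(ii) \Rightarrow (i)$ being a routine conjugation argument and $(i) \Rightarrow (ii)$ requiring more care. For $(ii) \Rightarrow (i)$, starting from a unitary representation $\pi : G \to \mathcal{U}(\mathcal{H})$ with vanishing coefficients and almost invariant unit vectors $(\xi_n)$, I would define $\rho(g)(x) = \pi(g) x \pi(g)^*$ on $C_p$. Unitary invariance of the Schatten norms makes $\rho$ an orthogonal representation. For vanishing coefficients, I would compute on rank-one tensors: $\mathrm{Tr}\bigl(\rho(g)(\xi \otimes \overline{\eta}) \cdot (\alpha \otimes \overline{\beta})\bigr) = \langle \alpha, \pi(g)\eta\rangle \langle \pi(g)\xi, \beta\rangle$, which tends to zero by hypothesis, and density of finite-rank operators in $C_p$ and $C_{p'}$ finishes the argument. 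For almost invariance, I would take the rank-one projections $x_n = \xi_n \otimes \overline{\xi_n}$, decompose $\rho(g) x_n - x_n$ as a sum of two rank-one operators, and use the identity $\|u \otimes \overline{v}\|_p = \|u\|\|v\|$ to obtain $\|\rho(g) x_n - x_n\|_p \leq 2\|\pi(g)\xi_n - \xi_n\|$.

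For $(i) \Rightarrow (ii)$, my plan is to transfer the given representation $\pi^p$ on $C_p$ to a unitary representation $\pi^2$ on the Hilbert space $C_2$, where property $(H)$ can be read off directly. The key simplification is that $\mathcal{B}(\mathcal{H})$ is a factor, so in the Yeadon decomposition $\pi^p(g) = u_g B_g J_g$ of Theorem \ref{thm2.3}, the positive operator $B_g$ has central spectral projections and is therefore a positive scalar $\lambda_g$; moreover any Jordan automorphism of the factor $\mathcal{B}(\mathcal{H})$ is either a $*$-isomorphism or a $*$-anti-isomorphism, both of which preserve the canonical trace, so the compatibility $\tau(x) = \tau(B_g^p J_g(x))$ forces $\lambda_g = 1$. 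Hence $\pi^p(g) = u_g J_g$, and the same formula defines an isometric representation $\pi^2 : G \to O(C_2)$, which is also what Theorem \ref{thm2.9} produces via the Mazur map (since $B_g^{p/2} = 1$). Because both $*$-isomorphisms and $*$-anti-isomorphisms of $\mathcal{B}(\mathcal{H})$ are $\mathbb{C}$-linear, $\pi^2$ is in fact a unitary representation on $C_2$, and Proposition \ref{prp-conjugatepi} transfers the almost invariant vectors of $\pi^p$ to almost invariant vectors of $\pi^2$ via the Mazur map $M_{p,2}$.

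The hard part will be transferring vanishing coefficients from $\pi^p$ to $\pi^2$. My approach is direct computation on rank-one operators: in the $*$-isomorphism case $J_g(x) = w_g x w_g^*$, one finds $\pi^p(g)(\xi \otimes \overline{\eta}) = (u_g w_g \xi) \otimes \overline{(w_g \eta)}$, so the $C_p$-$C_{p'}$ pairing of this with $\alpha \otimes \overline{\beta}$ equals $\langle \alpha, w_g\eta\rangle \langle u_g w_g\xi, \beta\rangle$, while the Hilbert-Schmidt pairing of $\pi^2(g)(\xi \otimes \overline{\eta})$ with $\alpha \otimes \overline{\beta}$ equals the same product up to swapping the roles of $\alpha$ and $\beta$; the $*$-anti-isomorphism case involves a fixed conjugation on $\mathcal{H}$ but yields a structurally identical factorisation. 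Thus vanishing coefficients of $\pi^p$ on rank-ones is equivalent to vanishing coefficients of $\pi^2$ on rank-ones, and uniform boundedness together with density of finite-rank operators in $C_2$ extends the equivalence to arbitrary elements. The resulting unitary representation $\pi^2$ on the Hilbert space $C_2$ with vanishing coefficients and almost invariant vectors then yields property $(H)$. The main obstacle I anticipate is the bookkeeping in the $*$-anti-isomorphism case, but the structural equivalence of the coefficient formulas should persist.
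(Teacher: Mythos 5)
Your proposal is correct and follows essentially the same route as the paper: the implication $(ii)\Rightarrow(i)$ uses the same conjugation representation $x\mapsto\rho(g)x\rho(g)^{*}$ with rank-one computations for both vanishing coefficients and almost invariance, and $(i)\Rightarrow(ii)$ rests on the same key point that the isometries of $C_{p}$ have trivial $B_{g}$-part, so the Mazur-map conjugate $\pi^{2}$ coincides with $\pi^{p}$ on finite-rank operators and inherits vanishing coefficients, while Proposition \ref{prp-conjugatepi} transfers the almost invariant vectors. The only cosmetic difference is that you derive the structure of $O(C_{p})$ from Yeadon's theorem and the factoriality of $\mathcal{B}(\mathcal{H})$, where the paper simply cites Arazy's description of $O(C_{p})$.
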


\begin{proof}
(i) $\Rightarrow$ (ii) : Let $\pi^{p}:G\rightarrow O(C_{p})$ be an orthogonal representation with vanishing coefficients and which almost has invariant vectors. By Theorem \ref{thm2.9} and Arazy's description of $O(C_{p})$ (see \cite{Arazy1975isometriesCp}), every isometry $\pi^{2}(g)=M_{p,2}\circ \pi^{p}(g)\circ M_{2,p}$ coincides with $\pi^{p}(g)$ on the set of finite rank operators, hence the representation $\pi^{2}:G\rightarrow O(C_{2})$ has vanishing coefficients. By Proposition \ref{prp-conjugatepi}, $\pi^{2}$ almost has invariant vectors. Hence $G$ has property $(H)$.\\

(ii) $\Rightarrow$ (i) : Let $\rho:G\rightarrow\mathcal{U}(\mathcal{H})$ be a unitary representation of the group $G$ on the Hilbert space $\mathcal{H}$ with almost invariant vectors and vanishing coefficients.\\
Define
$$\pi^{p}(g)(x)=\rho(g)x\rho(g^{-1})\textrm{ for }x\in\mathcal{B}(\mathcal{H}).$$
Clearly, the previous formula defines an orthogonal representation $\pi^{p}:G\rightarrow O(C_{p})$. Let us show that $\pi^{p}$ has vanishing coefficients. \\
By density of finite rank operators and linearity of the trace, it suffices to show that $\lim_{g\rightarrow\infty}{\rm Tr}(\pi^{p}(g)(x)y)=0$ for $x,y$ positive finite rank operators. This is straightforward to check. Indeed, we can write
\begin{displaymath}
\begin{split}
&x=\sum_{i=1}^{n}<.,\xi_{i}>\xi_{i}\\
&y=\sum_{j=1}^{m}<.,\eta_{j}>\eta_{j}
\end{split}
\end{displaymath}
Let $(\zeta_{k})$ be an orthonormal basis of $\mathcal{H}$. Then, for vectors $\zeta_{i}, \eta_{j}\in\mathcal{H}$,
\begin{displaymath}
\begin{split}
{\rm Tr} (\pi^{p}(g)(x)y)&=\sum_{k}<\pi^{p}(g)(x)y\zeta_{k},\zeta_{k}>\\
&=\sum_{k}\sum_{j=1}^{m}\sum_{i=1}^{n}<\zeta_{k},\eta_{j}><\rho(g^{-1})\eta_{j},\xi_{i}><\rho(g)\xi_{i},\zeta_{k}>
\end{split}
\end{displaymath}
Hence,
\begin{displaymath}
\begin{split}
\vert{\rm Tr}(\pi^{p}(g)(x)y)\vert&\leq\sum_{j=1}^{m}\sum_{i=1}^{n}\vert<\rho(g^{-1})\eta_{j},\xi_{i}>\vert\vert\vert\eta_{j}\vert\vert\vert\vert\xi_{i}\vert\vert.
\end{split}
\end{displaymath}
As $\pi^{p}$ has vanishing coefficients, the right side of the inequality tends to $0$ as $g$ tends to infinity.\\

It remains to show that $\pi^{p}$ has almost invariant vectors in $C_{p}$. In view of Proposition \ref{prp-conjugatepi}, it suffices to prove this for $p=2$. For $\xi\in\mathcal{H}$ with $\n\xi\n=1$, denote by $P_{\xi}\in C_{2}$ the orthogonal projection on the subspace $\mathbb{C}\xi$. Observe that $\n P\xi\n_{2}=1$ and for $\xi, \eta$ two unit vectors in $\mathcal{H}$, we have
$$\n P_{\xi}-P_{\eta}\n_{2}\leq 2\n\xi-\eta\n.$$
Let $(\xi_{n})_{n}$ be a sequence of almost invariant vectors for $\rho$. Set $v_{n}=P_{\xi_{n}}$ for all $n$. Then, for every $g\in G$, $\pi^{2}(g)(v_{n})=P_{\rho(g)\xi_{n}}$. The previous inequality therefore shows that $(v_{n})_{n}$ is a sequence of almost invariant vectors for $\pi^{2}$.\\
Hence $\pi^{p}$ almost has invariant vectors and has vanishing coefficients, thus $G$ has property $(H_{C_{p}})$.
\\

\end{proof}

The following proposition implies that only compact groups have property $(H_{S_{p}})$ or property $(H_{L^{p}\oplus^{p} S_{p}})$ for $p\neq2$.
\begin{thm}\label{thm-HSp}
Let $G$ be a non-compact locally compact group. Let $p\neq2$. There is no representation of $G$ on $S_{p}$ or on $L^{p}\oplus^{p} S_{p}$. Consequently, $G$ does not have property $(H_{S_{p}})$ or property $(H_{L^{p}\oplus^{p} S_{p}})$.
\end{thm}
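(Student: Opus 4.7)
The plan is to show that for $p\neq 2$ any orthogonal representation $\pi\colon G\to O(S_p)$ is ``blockwise compact,'' which excludes the existence of non-trivial vanishing coefficients whenever $G$ is non-compact. By Proposition~\ref{prp2.6}, every isometry of $S_p$ splits as $\oplus_n U_n$ with $U_n\in O(\mathcal{M}_n,\|\cdot\|_p)$, and each $U_n$ has the shape $x\mapsto u_n x v_n$ or $x\mapsto u_n\,{}^t\!x\, v_n$ with unitaries $u_n,v_n\in\mathcal{M}_n$. Consequently $O(\mathcal{M}_n,\|\cdot\|_p)$ is a continuous homomorphic image of $U(n)\times U(n)\times\mathbb{Z}/2$, hence compact; and $\pi$ itself decomposes as $\bigoplus_n \pi_n$, where each $\pi_n\colon G\to O(\mathcal{M}_n,\|\cdot\|_p)$ is a continuous homomorphism into a compact group.

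Now suppose that $\pi$ has vanishing coefficients. Fix $n\geq 1$ and nonzero $x\in\mathcal{M}_n\subset S_p$ and $y\in\mathcal{M}_n\subset S_{p'}$, each viewed in the $n$-th summand. The duality pairing between $S_p$ and $S_{p'}$ restricts to $\tau_n(xy)$ on the $n$-th block, so the coefficient $g\mapsto \tau_n(\pi_n(g)(x)\,y)$ belongs to $C_0(G)$. Since $G$ is non-compact, pick a sequence $g_k\to\infty$; by compactness of $O(\mathcal{M}_n,\|\cdot\|_p)$ (and finite-dimensionality of $\mathcal{M}_n$) a subsequence $\pi_n(g_{k_j})$ converges in norm to an isometry $T\in O(\mathcal{M}_n,\|\cdot\|_p)$. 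Passing to the limit yields $\tau_n(T(x)\,y)=0$ for all $x,y\in\mathcal{M}_n$, so $T=0$ by non-degeneracy of the trace pairing, contradicting the fact that $T$ is an isometry of the nonzero space $\mathcal{M}_n$.

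The $L^p\oplus^p S_p$ case is then immediate from Proposition~\ref{prp2.7}, which gives $O(L^p\oplus^p S_p)=O(L^p)\oplus O(S_p)$: any orthogonal representation on $L^p\oplus^p S_p$ restricts to one on $S_p$, and vanishing coefficients for the former would force vanishing coefficients for the $S_p$-part, which has just been ruled out. The conclusion about $(H_{S_p})$ and $(H_{L^p\oplus^p S_p})$ then follows at once from the definition of these properties, which demands a representation with vanishing coefficients.

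The conceptual heart of the argument, and its only subtle point, is the compactness of $O(\mathcal{M}_n,\|\cdot\|_p)$ for $p\neq 2$. This is where Yeadon's rigidity theorem is decisive: it collapses the isometry group of each matrix block to a compact one, an effect that is completely lost at $p=2$, where $O(S_2)=U(\ell_2)$ is no longer compact and ample room is available for vanishing coefficients.
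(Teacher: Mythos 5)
Your proof is correct and follows essentially the same route as the paper: both rest on the block decomposition $\pi=\oplus_n\pi_n$ from Proposition~\ref{prp2.6}, derive a contradiction from the impossibility of vanishing coefficients on a fixed finite-dimensional block $\mathcal{M}_n$ when $G$ is non-compact, and handle the $L^p\oplus^p S_p$ case via the splitting of Proposition~\ref{prp2.7}. The only cosmetic difference is in how the contradiction is extracted — the paper notes that every matrix entry of the unitary $u_n(g)v_n(g)$ would have to tend to $0$ while its norm stays equal to $1$, whereas you pass to a subsequential limit $T$ using compactness of $O(\mathcal{M}_n,\Vert\cdot\Vert_p)$ and conclude $T=0$; both exploit the same finite-dimensionality phenomenon.
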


\begin{proof}
Assume, by contradiction, that we have a representation $\pi:G\rightarrow O(S_{p})$ with vanishing coefficients. By Proposition \ref{prp2.6}, such a representation can be written as a sum $\pi=\oplus_{n}\pi_{n}$ of representations $\pi_{n}$ on $\mathcal{M}_{n}$. For all $n$ and $g\in G$, there exist $u_{n}(g)$, $v_{n}(g)$ unitaries in $\mathcal{M}_{n}$ such that $\pi_{n}(g)x=u_{n}(g)xv_{n}(g)$ or $\pi_{n}(g)x=u_{n}(g)(^{t}x) v_{n}(g)$ for all $x\in\mathcal{M}_{n}$. Since $\pi$ has vanishing coefficients, each $\pi_{n}$ has also vanishing coefficients. Hence
$$\lim_{g\rightarrow\infty}{\rm Tr}_{n}(u_{n}(g)v_{n}(g)x)=0\textrm{ for all }x\in\mathcal{M}_{n}.$$
This implies that every coefficient of the matrix $u_{n}(g)v_{n}(g)$ tends to $0$ as $g$ tends to $\infty$, which contradicts the facts that $\vert\vert u_{n}(g)v_{n}(g)\vert\vert=1$ for all $g\in G$ and that $G$ is non-compact.\\

The claim about property $(H_{L^{p}\oplus^{p} S_{p}})$ is proved with a similar way, using the decomposition $O(L^{p}\oplus^{p} S_{p})=O(L^{p})\oplus O(S_{p}) $ from Proposition \ref{prp2.7}.

\end{proof}

\subsection{Property $(H_{l_{p}})$}

Let $G$ be a second countable locally compact group, and let $1\leq p<\infty$, $p\neq2$. Let $\pi^{p}:G\rightarrow O(l_{p})$ be a representation $G$ on $l_{p}$. \\

It was proved in \cite{BBBO2013Tlp} (Corollary 20) that the conjugate representation $\pi^{2}$ of $G$ on $l_{2}$ is unitarily equivalent to a sum of monomial representations associated to open subgroups of $G$, that is, there exist a family of open subgroups $(H_{i})_{i\in I}$ of $G$ and unitary characters $(\chi_{i})_{i\in I}$ on the $H_{i}$'s such that
$$\pi^{2}=M_{p,2}\circ \pi^{p}\circ M_{2,p}\simeq \oplus_{i\in I}{\rm Ind}_{H_{i}}^{G}\chi_{i}.$$ 
If $G$ is connected, the only open subgroup of $G$ is $G$ itself and a character on $G$ is $C_{0}$ if and only if $G$ is compact. This gives item 1. in the following result.

\begin{thm}\label{thm-lp}
Let $1\leq p<\infty$, $p\neq2$.
\begin{enumerate}
\item Let $G$ be a connected second countable locally compact group. Then $G$ has property $(H_{l_{p}})$ if and only if $G$ is compact.
\item Let $G$ be a totally disconnected locally compact second countable group. The following properties are equivalent :\\
(i) $G$ has property $(H_{l_{p}})$.\\
(ii) $G$ is amenable.
\end{enumerate}
\end{thm}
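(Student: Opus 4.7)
The plan is to leverage the structural theorem from \cite{BBBO2013Tlp} (Corollary~20) quoted just before the statement: every orthogonal representation $\pi^{p}:G\to O(l_{p})$ gives rise, via conjugation by the Mazur map, to a unitary representation $\pi^{2}\simeq\bigoplus_{i\in I}\mathrm{Ind}_{H_{i}}^{G}\chi_{i}$ on $l_{2}$, with each $H_{i}\leq G$ open and each $\chi_{i}$ a unitary character of $H_{i}$. Because $\mathcal{M}=l^{\infty}$ carries the counting trace, Yeadon's Theorem~\ref{thm2.3} forces $B=1$, so every $\pi^{p}(g)$ is a weighted permutation with phase weights, and the defining formula is independent of $p$; consequently vanishing of coefficients and existence of almost invariant vectors transfer between $\pi^{p}$ and $\pi^{2}$ (the latter via Proposition~\ref{prp-conjugatepi}).

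Item~1 is then essentially the argument sketched in the paragraph preceding the theorem. Connectedness forces every open $H_{i}$ to equal $G$, reducing $\pi^{2}$ to $\bigoplus_{i}\chi_{i}$; since $|\chi_{i}|=1$ pointwise, having $\chi_{i}\in C_{0}(G)$ forces $G$ compact. Conversely, if $G$ is compact, $C_{0}(G)=C(G)$ and the trivial representation on $l_{p}$ witnesses $(H_{l_{p}})$.

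For item~2, I would first handle the direction $(ii)\Rightarrow(i)$. If $G$ is non-compact, van Dantzig's theorem yields a compact open subgroup $K$, and I would take $\pi^{p}=\lambda_{G/K}$ acting by permutation of the standard basis of $l_{p}(G/K)\cong l_{p}$. The matrix coefficients of $\lambda_{G/K}$ are supported on single cosets of conjugates of $K$, which are relatively compact, so $\pi^{p}$ has vanishing coefficients. Amenability of $G$ implies co-amenability of the closed subgroup $K$, yielding $1_{G}\prec\lambda_{G/K}$; explicit almost invariant vectors in $l_{p}$ are $|F_{n}|^{-1/p}\mathbf{1}_{F_{n}}$ for a F\o{}lner sequence $(F_{n})$ in $G/K$.

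The main work lies in the reverse direction $(i)\Rightarrow(ii)$. Starting from $\pi^{p}$ realizing $(H_{l_{p}})$ and the decomposition $\pi^{2}\simeq\bigoplus_{i}\mathrm{Ind}_{H_{i}}^{G}\chi_{i}$, I would evaluate the diagonal coefficient $g\mapsto\langle\mathrm{Ind}_{H_{i}}^{G}\chi_{i}(g)\delta_{eH_{i}},\delta_{eH_{i}}\rangle$: it has modulus $1$ on $H_{i}$ and vanishes elsewhere, so its membership in $C_{0}(G)$ (inherited from $\pi^{2}$) forces $H_{i}$ to be compact. With each $H_{i}$ compact open, the character $\chi_{i}$ is weakly contained in $\lambda_{H_{i}}$, whence Fell's continuity of induction yields $\mathrm{Ind}_{H_{i}}^{G}\chi_{i}\prec\mathrm{Ind}_{H_{i}}^{G}\lambda_{H_{i}}=\lambda_{G}$. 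Summing over $i$ gives $\pi^{2}\prec\lambda_{G}$, and combined with $1_{G}\prec\pi^{2}$ one deduces $1_{G}\prec\lambda_{G}$, i.e.\ $G$ is amenable. The delicate points to check carefully are the Mazur-map correspondence (turning the $l_{p}$-hypothesis into an $l_{2}$-hypothesis in a weak-containment-respecting way) and the passage of weak containment through possibly infinite direct sums $\bigoplus_{i}\mathrm{Ind}_{H_{i}}^{G}\chi_{i}$.
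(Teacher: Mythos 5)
Your proposal is correct and follows essentially the same route as the paper: the decomposition $\pi^{2}\simeq\bigoplus_{i}\mathrm{Ind}_{H_{i}}^{G}\chi_{i}$ from Corollary 20 of \cite{BBBO2013Tlp}, compactness of each $H_{i}$ via the diagonal coefficient, weak containment in $\lambda_{G}$ and Hulanicki--Reiter for $(i)\Rightarrow(ii)$, and van Dantzig plus the quasi-regular representation $\lambda_{G/K}$ for $(ii)\Rightarrow(i)$. The minor variations (making $B=1$ explicit via Yeadon, F\o{}lner vectors instead of amenability of the $G$-action on $G/K$) are cosmetic.
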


\begin{proof} We only have to show item $2$.\\
(i) $\Rightarrow$ (ii) : Let $\pi^{p}:G\rightarrow O(l_{p})$ be an orthogonal representation with almost invariant vectors, and with vanishing coefficients. Then we have $\pi^{2}\simeq\oplus_{i\in I}{\rm Ind}_{H_{i}}^{G}\chi_{i}$ for some open subgroups $H_{i}$ and unitary characters $\chi_{i}$ on $H_{i}$.\\
Since $\pi^{2}$ has the same form as $\pi^{p}$, $\pi^{2}$ has vanishing coefficients, and so does $\pi_{i}={\rm Ind}_{H_{i}}^{G}\chi_{i}$ for every $i\in I$. Let $i\in I$ be fixed. Then $\pi^{i}_{/H_{i}}$ contains $\chi_{i}$; indeed, we have 
$$\pi_{i}(h)\delta_{H_{i}}=\chi_{i}(h)\delta_{H_{i}}\textrm{ for all }h\in H_{i}.$$
It follows that $\chi_{i}\in C_{0}(H_{i})$ and hence $H_{i}$ is compact.\\
Since $H_{i}$ is compact, $\chi_{i}\subset\lambda_{H_{i}}$ and hence $\pi_{i}={\rm Ind}_{H_{i}}^{G}\chi_{i}\subset{\rm Ind}_{H_{i}}^{G}\lambda_{H_{i}}\subset\lambda_{G}$. So, $\pi^{2}$ is weakly contained in $\lambda_{G}$. Since we have also $1_{G}\prec\pi^{2}$, it follows that $1_{G}\prec\lambda_{G}$. By Hulanicki-Reiter theorem, $G$ is amenable.\\

(ii) $\Rightarrow$ (i) : Since $G$ is totally disconnected, by van Dantzig's theorem (see Theorem 7.7 in \cite{HewittRoss})), there exists a compact open subgroup $K$ of $G$. Let $(\pi^{2},l_{2}(G/K))$ be the quasi-regular representation of $G$ on $l_{2}(G/K)$, and let $\pi^{p}=M_{2,p}\circ \pi^{2}\circ M_{2,p}$. Notice that, for $g\in G$, every $\pi^{p}(g)$ is given by the same formula as $\pi^{2}(g)$ on the common dense subspace $l_{1}(G/K)$, so $\pi^{p}:G\rightarrow O(l_{p}(G/K))$ defines an orthogonal representation. Since $K$ is compact, $\lambda_{G/K}$ is contained in the regular representation $\lambda_{G}$ (by identifying $l_{2}(G/K)$ with the $K$-invariant functions in $L_{2}(G)$). Hence $\lambda_{G/K}$ has vanishing coefficients, since all matrix coefficients of $\lambda_{G}$ are $C_{0}$. It follows that $\pi^{p}$ has vanishing coefficients.\\
Since $G$ is amenable, the action of $G$ on $G/K$ is amenable (see p.28 in \cite{Eymard1972moyenne}). Hence, $\lambda_{G/K}$ almost has invariant vectors. Therefore, $G$ has $(H_{l_{p}})$.\\
\end{proof}

\section{Strongly mixing actions on $L_{p}(\mathcal{M})$}

In this section, we assume that $\mathcal{M}$ is a finite von Neumann algebra equipped with a normalized trace $\tau$. The main technical tools for the proof of Theorem \ref{thm-Hpmix} are the two following lemmas. 
\begin{lem}\label{lem-smJordan}
Let $G$ be a locally compact group. Let $\mathcal{M}$ be a finite von Neumann algebra, and let $1\leq p<\infty$, $p\neq2$. Let $\pi:G\rightarrow O(L_{p}(\mathcal{M}))$ be a strongly mixing orthogonal representation. Then $\pi(g)$ is a Jordan {*}-automorphism of $\mathcal{M}$ for every $g\in G$.
\end{lem}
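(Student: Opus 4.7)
The plan is to apply Yeadon's description of $L_p$-isometries (Theorem \ref{thm2.3}) to each $\pi(g)$ and write $\pi(g) = u_g B_g J_g$; the goal then becomes to show $u_g = 1$ and $B_g = 1$ for every $g \in G$, for then $\pi(g) = J_g$ is a Jordan {*}-automorphism of $\mathcal{M}$.

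First I would apply the strong mixing hypothesis with $x = 1$. Since a Jordan {*}-isomorphism sends unit to unit, $J_g(1) = 1$ and therefore $\pi(g)(1) = u_g B_g$, so for every $y \in \mathcal{M}$,
\[
\tau(u_g B_g y) \longrightarrow \tau(y) \qquad \text{as } g \to \infty.
\]
Since $\mathcal{M}$ is finite, $\mathcal{M}$ is dense in $L_{p'}(\mathcal{M})$, and as $\|\pi(g)(1)\|_p = \|1\|_p = \tau(1)^{1/p} = 1$ is bounded, the pointwise limit extends by approximation to all $y \in L_{p'}(\mathcal{M})$. Hence $\pi(g)(1) \rightharpoonup 1$ weakly in $L_p(\mathcal{M})$. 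Combined with the norm equality $\|\pi(g)(1)\|_p = \|1\|_p$ and the uniform convexity of $L_p(\mathcal{M})$ for $1 < p < \infty$, the Radon--Riesz property upgrades this to strong convergence: $\pi(g)(1) \to 1$ in $L_p$-norm as $g \to \infty$.

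Next I would bootstrap this asymptotic statement to a pointwise equality, using the cocycle property $\pi(hg) = \pi(h) \pi(g)$. For any fixed $h \in G$, continuity of the isometry $\pi(h)$ on $L_p(\mathcal{M})$ yields
\[
\pi(h)(1) \;=\; \lim_{g \to \infty} \pi(h)(\pi(g)(1)) \;=\; \lim_{g \to \infty} \pi(hg)(1) \;=\; 1,
\]
the last equality because $hg \to \infty$ whenever $g \to \infty$. Therefore $u_h B_h = \pi(h)(1) = 1$ in $L_p(\mathcal{M})$, and the uniqueness of the polar decomposition of $1 \in L_p(\mathcal{M})$ (which is the trivial $1 = 1 \cdot 1$) forces $u_h = 1$ and $B_h = 1$ for every $h$. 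So $\pi(h) = J_h$ is a Jordan {*}-automorphism of $\mathcal{M}$, as claimed.

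The main technical obstacle is the case $p = 1$, where $L_1(\mathcal{M})$ is not uniformly convex and the Radon--Riesz step fails: weak$^*$ convergence of $u_g B_g$ to $1$ together with preservation of the $L_1$-norm does not in general imply norm convergence, as elementary examples in a commutative $L_1$ already show. A supplementary argument will be needed here, most naturally by passing through the Mazur map to some $L_q$ with $1 < q < \infty$, $q \neq 2$. Theorem \ref{thm2.9} shows that the Yeadon data $(u_g, J_g)$ of $\pi^p$ and $\pi^q$ are identical, only the scalar factor changing from $B_g$ to $B_g^{p/q}$, so triviality of $u_g$ and $B_g$ at one exponent implies it at every exponent; the genuinely delicate point is to transport enough of the strong mixing behaviour across the (non-linear) Mazur map to run the $L_q$ argument.
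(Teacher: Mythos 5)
For $1<p<\infty$, $p\neq 2$, your argument is correct but takes a genuinely different route from the paper's. You pass from strong mixing to weak convergence $\pi(g)(1)\rightharpoonup 1$, upgrade to norm convergence via uniform convexity (Radon--Riesz), and then use the translation trick $\pi(h)(1)=\lim_{g\to\infty}\pi(h)(\pi(g)(1))=\lim_{g\to\infty}\pi(hg)(1)=1$ to convert an asymptotic statement into an exact one; this handles all $1<p<\infty$, $p\neq2$, uniformly. The paper instead works entirely at the level of the trace: the same translation trick applied to $\tau$ gives the exact identity $\tau(u_gB_g)=1$ for every fixed $g$, and combining this with $\tau(B_g^{p})=\|1\|_p^{p}=1$ and the equality cases of H\"{o}lder and Cauchy--Schwarz forces $u_gB_g=1$. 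That computation needs $p>2$ (so that $\tau(B_g^{2})$ is squeezed between $\tau(B_g)$ and $\tau(B_g^{p})^{2/p}$), and the case $p<2$ is reduced to it by passing to the contragredient representation on $L_{p'}$ via Proposition \ref{prp-contragradient}. Your approach buys uniformity in $p$ and avoids the contragredient; the paper's buys a purely trace-theoretic proof that never invokes uniform convexity.

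The genuine gap is $p=1$, which the statement includes and which you correctly flag: weak convergence plus preservation of the $L_1$-norm does not give norm convergence, and the trace identities at $p=1$ only yield $\tau(u_gB_g)=\tau(B_g)=1$, whose equality case forces $u_g=1$ but not $B_g=1$. Be aware that the repair you sketch is circular as stated: transporting strong mixing across the Mazur map is precisely Corollary \ref{cor-smconjugate}, which the paper deduces \emph{from} this lemma (it needs $u_g=B_g=1$ to identify $\pi^{q}(g)$ with $J_g$ on $\mathcal{M}$), so you cannot invoke it here without an independent argument. Note that the paper's own reduction of $p<2$ to $p'>2$ also degenerates at $p=1$ (where $p'=\infty$ and Yeadon's framework no longer applies), so this endpoint is delicate in the source as well; completing it requires an additional idea beyond what either proof currently supplies, perhaps by exploiting the relation $B_{g_1g_2}=B_{g_1}J_{g_1}(B_{g_2})$ from Theorem \ref{thm2.9}. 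For $p>1$ your proof stands as written.
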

\begin{proof}

Let $g\in G$. By Theorem \ref{thm2.3}, $\pi(g)$ has a Yeadon decomposition 
$$\pi(g)=u_{g}B_{g}J_{g}$$
with $u_{g}$ a unitary in $\mathcal{M}$, $B_{g}$ a positive operator affiliated with $\mathcal{M}$ such that its spectral projections commute with $\mathcal{M}$, and $J_{g}$ a {*}-Jordan isomorphism. Set $v_{g}=u_{g}B_{g}$ for all $g\in G$.\\

We will show that $u_{g}=1$ and $B_{g}=1$ for all $g\in G$. We claim that it suffices to give the proof when $p>2$. Indeed, if $p<2$, let $\pi^{p}(g)=u_{g}B_{g}J_{g}$ for every $g\in G$. By Proposition \ref{prp-contragradient}, the contragradient representation $(\pi^{p}){'}$ of $\pi^{p}$ on $L_{p'}$, with $p'>2$ the conjugate exponent of $p$, is given by the following formula
$$(\pi^{p}){'}(g)x=u_{g}^{*}B_{g}^{\frac{p}{p'}}u_{g}J_{g}(x)u_{g}^{*}\textrm{ for all }g\in G, x\in \mathcal{M}.$$
Moreover, the contragradient is obviously strongly mixing. Hence, if the claim is true for $p'>2$, then $u_{g}^{*}=1=u_{g}$, $B_{g}^{\frac{p'}{p}}=1=B_{g}$ and the claim is true for $p$.\\

So we can assume that $p>2$. For $g\in G$, $J_{g}(1)=1$ since $J_{g}$ is a Jordan isomorphism of $\mathcal{M}$. Since $\pi$ is strongly mixing, for $x=1$, we have 
$$\lim_{g\rightarrow\infty}\tau(\pi(g)(y))=\lim_{g\rightarrow\infty}\tau(\pi(g)(y)1)=\tau(y)\textrm{ for all }y\in\mathcal{M}.$$
Therefore, for $y=1$, we obtain 
$$\lim_{g\rightarrow\infty}\tau(v_{g})=1.$$
On the other hand, for $g_{0}\in G$ be fixed, we have
$$\lim_{g\rightarrow\infty}\tau(v_{gg_{0}})=\lim_{g\rightarrow\infty}\tau(\pi(g)\pi(g_{0})(1))=\tau(\pi(g_{0})(1))=\tau(v_{g_{0}}).$$
Hence $\tau(v_{g})=1$ for all $g\in G$. \\

Let $g\in G$ be fixed. Since $\pi(g)\in O(L_{p}(\mathcal{M}))$ and $1\in L_{p}(\mathcal{M})$, we have $\tau(\vert\pi(g)1\vert^{p})=\n 1\n_{p}^{p}=1$, that is $\tau(B_{g}^{p})=1$. Using twice H\"{o}lder's inequality, we have
$$1=\tau(u_{g}B_{g})\leq\tau(B_{g})\leq\tau(B_{g}^{t})^{1/t}\leq\tau(B_{g}^{p})^{1/p}=1\textrm{ for }1\leq t\leq p,$$
and it follows that $\tau(B_{g}^{2})=1$. Now by the Cauchy-Schwarz inequality,
$$1=\tau(v_{g})\leq \sqrt{\tau(B_{g}^{2})}=1.$$
The equality case gives that $v_{g}=u_{g}B_{g}=1$. From the uniqueness in the polar decomposition, it follows that $u_{g}=1$ and $B_{g}=1$. Hence the lemma is proved.
\end{proof}

The following corollary is a straightforward consequence of the previous Lemma \ref{lem-smJordan}, using Theorem \ref{thm2.9} in the case where $u_{g}=1$ and $B_{g}=1$ for all $g\in G$.
\begin{cor}\label{cor-smconjugate}
Let $1\leq p<\infty$, $p\neq2$, and $1\leq q<\infty$. Let $\pi^{p}:G\rightarrow O(L_{p}(\mathcal{M}))$ be a strongly mixing representation. Then the conjugate representation $\pi^{q}$ is strongly mixing.
\end{cor}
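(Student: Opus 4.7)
My plan is to chain together the two ingredients the corollary advertises: Lemma \ref{lem-smJordan} tells us what a strongly mixing $\pi^{p}$ actually looks like, and Theorem \ref{thm2.9} tells us how that structure transports along the Mazur map.

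First, I would apply Lemma \ref{lem-smJordan} to the hypothesis. Writing the Yeadon decomposition $\pi^{p}(g)=u_{g}B_{g}J_{g}$, the lemma forces $u_{g}=1$ and $B_{g}=1$ for every $g\in G$, so that $\pi^{p}(g)$ acts on $\mathcal{M}\cap L_{p}(\mathcal{M})=\mathcal{M}$ simply as the Jordan $\ast$-automorphism $J_{g}$. In particular the strong mixing assumption rewrites as
$$\lim_{g\to\infty}\tau(J_{g}(x)y)=\tau(x)\tau(y)\quad\text{for all }x,y\in\mathcal{M}.\qquad (\ast)$$

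Next, I would invoke Theorem \ref{thm2.9}(i) with the same $u_{g},B_{g},J_{g}$: the conjugate representation $\pi^{q}=M_{p,q}\circ\pi^{p}\circ M_{q,p}$ is an orthogonal representation of $G$ on $L_{q}(\mathcal{M})$ whose Yeadon decomposition is
$$\pi^{q}(g)(x)=u_{g}B_{g}^{p/q}J_{g}(x)=J_{g}(x)\quad\text{for all }x\in\mathcal{M},\ g\in G,$$
since $u_{g}=1$ and $B_{g}^{p/q}=1$. (This makes sense regardless of whether $q=2$: in that case $\pi^{q}$ is the unitary representation $g\mapsto J_{g}$.) Consequently $\pi^{q}(g)$ coincides with $\pi^{p}(g)$ on $\mathcal{M}$.

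Finally I would unwind the definition of strong mixing (Definition \ref{df-sm}), which only tests elements of $\mathcal{M}$ against $\tau$. For $x,y\in\mathcal{M}$,
$$\tau\bigl(\pi^{q}(g)(x)y\bigr)=\tau(J_{g}(x)y),$$
and by $(\ast)$ this tends to $\tau(x)\tau(y)$ as $g\to\infty$. Hence $\pi^{q}$ is strongly mixing. There is essentially no obstacle here beyond observing that Lemma \ref{lem-smJordan} strips the representation down to a pure Jordan part, which is exactly the piece that Theorem \ref{thm2.9} transports unchanged across the Mazur map.
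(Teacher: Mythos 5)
Your proposal is correct and matches the paper's intended argument exactly: the paper presents this corollary as a direct consequence of Lemma \ref{lem-smJordan} combined with Theorem \ref{thm2.9} in the special case $u_{g}=1$, $B_{g}=1$, which is precisely the chain you carry out. The only addition you make is writing out the final verification that the mixing condition, being a statement about elements of $\mathcal{M}$ only, is untouched when $\pi^{q}$ agrees with $J_{g}$ there --- a detail the paper leaves implicit.
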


The following lemma asserts that the multiples of the unit $1\in \mathcal{M}$ are the only invariant vectors for a strongly mixing representation. We set 
$$L_{p}^{0}(\mathcal{M})=\{\esp x\in L_{p}(\mathcal{M})\esp\vert\esp\tau(x)=0\esp\}.$$
\begin{lem}\label{lem-smL0}
Let $G$ be a locally compact group. Let $\mathcal{M}$ be a finite von Neumann algebra and let $1\leq p<\infty$. Let $\pi^{p}:G\rightarrow O(L_{p}(\mathcal{M}))$ be a strongly mixing representation. Then $L_{p}(\mathcal{M}){'}(\pi^{p})=L_{p}^{0}(\mathcal{M})$. 
\end{lem}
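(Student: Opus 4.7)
The plan is to reduce the claim to computing the fixed vectors of the contragradient representation. By the definition of $L_{p}(\mathcal{M})'(\pi^{p})$ recalled in Section 2, once one establishes $L_{p'}(\mathcal{M})^{(\pi^{p}){'}(G)}=\mathbb{C}\cdot 1$, the identity $L_{p}(\mathcal{M})'(\pi^{p})=L_{p}^{0}(\mathcal{M})$ follows immediately, since $\tau(x\cdot\lambda 1)=0$ for every $\lambda\in\mathbb{C}$ is equivalent to $\tau(x)=0$. The whole proof therefore concentrates on characterising the fixed space of $(\pi^{p}){'}$ in $L_{p'}(\mathcal{M})$.

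I would first show the inclusion $\mathbb{C}\cdot 1\subset L_{p'}(\mathcal{M})^{(\pi^{p}){'}(G)}$. When $p\neq 2$, Lemma \ref{lem-smJordan} forces each $\pi^{p}(g)$ to coincide with a Jordan $*$-automorphism $J_{g}$, and Proposition \ref{prp-contragradient} then yields $(\pi^{p}){'}(g)(y)=J_{g}(y)$, so $(\pi^{p}){'}(g)(1)=J_{g}(1)=1$. For $p=2$, Lemma \ref{lem-smJordan} is unavailable and I would argue directly from Definition \ref{df-sm}: taking $x=y=1$ yields $\tau(\pi^{2}(g)(1))\to 1$, which combined with $\n\pi^{2}(g)(1)\n_{2}=1$ and $\n\pi^{2}(g)(1)-1\n_{2}^{2}=2-2\esp\textrm{Re}\esp\tau(\pi^{2}(g)(1))$ shows $\pi^{2}(g)(1)\to 1$ in $L_{2}(\mathcal{M})$; the group relation $\pi^{2}(gh)(1)=\pi^{2}(g)\pi^{2}(h)(1)$, combined with the isometry of $\pi^{2}(g)$ and the fact that $gh\to\infty$ as $g\to\infty$ for fixed $h$, then forces $\pi^{2}(h)(1)=1$ for every $h\in G$, and the same reasoning applies to $(\pi^{2}){'}$.

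The main step is the reverse inclusion. For $y\in L_{p'}(\mathcal{M})^{(\pi^{p}){'}(G)}$ and $x\in\mathcal{M}$, the duality pairing between $\pi^{p}$ and $(\pi^{p}){'}$ gives
$$\tau(\pi^{p}(g)(x)y)=\tau(x\cdot (\pi^{p}){'}(g^{-1})(y))=\tau(xy)\textrm{ for all }g\in G.$$
I would then extend the strong mixing condition of Definition \ref{df-sm} from $\mathcal{M}\times\mathcal{M}$ to $\mathcal{M}\times L_{p'}(\mathcal{M})$ by approximating $y$ in $L_{p'}$-norm by elements $y_{n}\in\mathcal{M}$; the H\"{o}lder bound $\vert\tau(\pi^{p}(g)(x)\xi)\vert\leq\n x\n_{p}\n\xi\n_{p'}$, together with the finiteness of $\tau$ (which gives $\n\cdot\n_{1}\leq\n\cdot\n_{p'}$), makes the error terms uniform in $g$. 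Passing to the limit $g\to\infty$ yields $\tau(xy)=\tau(x)\tau(y)$ for every $x\in\mathcal{M}$, i.e.\ $\tau(x(y-\tau(y)\cdot 1))=0$ for every $x\in\mathcal{M}$. Non-degeneracy of the trace pairing $\mathcal{M}\times L_{p'}(\mathcal{M})\to\mathbb{C}$ then forces $y=\tau(y)\cdot 1\in\mathbb{C}\cdot 1$, completing the argument.

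The expected obstacles are mild. The density argument extending strong mixing to $L_{p'}(\mathcal{M})$ needs care but is routine; the genuine nuisance is that $p=2$ falls outside the scope of Lemma \ref{lem-smJordan}, so the invariance of the constant vector has to be extracted ad hoc from the mixing and isometry data. Apart from that, the argument is uniform in $p\in[1,\infty)$.
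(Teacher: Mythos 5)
Your argument is correct and rests on the same two pillars as the paper's proof: the invariance of $1$ under the contragradient representation (via Lemma \ref{lem-smJordan}) and the computation $\tau(xy)=\tau(\pi^{p}(g)(x)y)\rightarrow\tau(x)\tau(y)$ for $y$ a fixed vector of $(\pi^{p}){'}$. The organization differs slightly, and to your advantage. You prove the stronger statement $L_{p'}(\mathcal{M})^{(\pi^{p}){'}(G)}=\mathbb{C}\cdot 1$ and read off both inclusions from the definition of $L_{p}(\mathcal{M}){'}$, whereas the paper treats the two inclusions separately: for $L_{p}^{0}(\mathcal{M})\subset L_{p}(\mathcal{M}){'}$ it runs essentially your limit computation with $x\in L_{p}^{0}(\mathcal{M})$ and an arbitrary fixed $y$, and for the reverse inclusion it only needs that $1$ itself is fixed, never identifying the full fixed space. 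Your version costs nothing extra and yields a cleaner statement. Two further points where your write-up is more careful than the source: the paper applies strong mixing to pairs where one entry lies in $L_{p}(\mathcal{M})$ or $L_{p'}(\mathcal{M})$ rather than in $\mathcal{M}$, leaving the density/H\"{o}lder extension implicit, which you spell out; and the paper invokes Lemma \ref{lem-smJordan} although that lemma is stated only for $p\neq2$ while the present statement covers $p=2$ --- your ad hoc argument ($\tau(\pi^{2}(g)(1))\rightarrow1$ plus isometry plus the cocycle trick with $gh\rightarrow\infty$) genuinely fills that small gap. (In the paper's application of the lemma the $p=2$ representation arises by Mazur conjugation from some $p\neq2$, so the gap is harmless there, but your proof makes the lemma correct as stated.)
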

\begin{proof}
Let $(\pi^{p}){'}:G\rightarrow O(L_{p'}(\mathcal{M}))$ be the contragradient representation of $\pi^{p}:G\rightarrow O(L_{p}(\mathcal{M}))$. Let $x\in L_{p}^{0}(\mathcal{M})$, and $y\in L_{p'}(\mathcal{M})^{(\pi^{p})'(G)}$. Then,
$$\tau(yx)=\lim_{g\rightarrow\infty}((\pi^{p})'(g)(y)x)=\lim_{g\rightarrow\infty}(y\pi^{p}(g^{-1})(x))=\tau(y)\tau(x)=0.$$
Thus $L_{p}^{0}(\mathcal{M})\subset L_{p}(\mathcal{M}){'}(\pi^{p})$.\\

To show that $L_{p}(\mathcal{M}){'}(\pi^{p})\subset L_{p}^{0}(\mathcal{M})$, it suffices to show that $1\in L_{p'}(\mathcal{M})^{(\pi^{p})'(G)}$. Indeed, if $1\in L_{p'}(\mathcal{M})^{(\pi^{p})'(G)}$, then for every $x\in L_{p}(\mathcal{M}){'}(\pi^{p})$, we have
$$\tau(x)=\tau(x1)=0.$$
Now let $g\in G$. By Lemma \ref{lem-smJordan}, $(\pi^{p})'(g)$ is a Jordan {*}-automorphism, so that $(\pi^{p})'(g)(1)=1$.
\end{proof}

Now we give the proof of Theorem \ref{thm-Hpmix}.

\begin{proof}[Proof of Theorem \ref{thm-Hpmix}]
$1.$ Let $\pi^{p}$ be a strongly mixing representation of $G$ on $L_{p}(\mathcal{M})$ which almost has invariant vectors in $L_{p}(\mathcal{M}){'}(\pi^{p})$. Then the conjugate representation $\pi^{2}$ defines a strongly mixing representation on $L_{2}(\mathcal{M})$ by Corollary \ref{cor-smconjugate}.\\

By Proposition \ref{prp-conjugatepi}, $\pi^{2}$ almost has invariant vectors in $L_{2}(\mathcal{M}){'}(\pi^{2})$. By Lemma \ref{lem-smL0}, we have $L_{2}(\mathcal{M}){'}(\pi^{2})=L_{2}^{0}(\mathcal{M})$. Since $\pi^{2}$ is strongly mixing, the restriction $\pi^{2}_{/L_{2}(\mathcal{M}){'}(\pi^{2})}$ of $\pi^{2}$ to $L_{2}(\mathcal{M}){'}(\pi^{2})$ almost has invariant vectors and has vanishing coefficients. Hence $G$ has property $(H)$. \\

$2.$ (ii) By Theorem 2.1.5 of Jolissaint in \cite{Cherix2001}, there exists an action $\pi$ of $G$ by automorphisms on the hyperfinite ${\rm II}_{1}$ factor $R$ such that \\
- $\pi$ is strongly mixing,\\
- there exists a non-trivial asymptotically invariant sequence $(e_{n})$ of projections in $R$, that is there exists a sequence of projections $(e_{n})$ such that $\tau(e_{n})=1/2$ for all $n$ and 
$$\lim_{n\rightarrow\infty}\sup_{g\in K}\vert\vert \pi(g)(e_{n})-e_{n}\vert\vert_{2}=0\textrm{ for all compact }K\textrm{ of }G.$$ 
This action defines a unitary representation $\pi^{2}:G\rightarrow \mathcal{U}(L_{2}(R))$ by 
$$\pi^{2}(g)x=\pi(g)(x)\textrm{ for all }g\in G,\esp x\in R,$$
as well as an orthogonal representation $\pi^{p}:G\rightarrow O(L_{p}(R))$ for every $1\leq p<\infty$. It is clear that $\pi^{p}$ is strongly mixing in the sense of Definition \ref{df-sm}.\\

By Lemma \ref{lem-smL0}, $L_{2}^{0}(R)=L_{2}(R){'}(\pi^{2})$. Define a sequence $(v_{n})$ in $R$ by  
$$v_{n}=e_{n}-\tau(e_{n})1.$$
Then $(v_{n})$ is a sequence of almost invariant vectors for $\pi$ in $L_{2}^{0}(R)=L_{2}(R){'}(\pi^{2})$. It is straightforward to check that $\vert\vert v_{n}\vert\vert_{2}^{2}=1/4$. Hence, $\pi^{2}$ has almost invariant vectors in $L_{2}(R){'}(\pi^{2})$. By Proposition \ref{prp-conjugatepi}, $\pi^{p}$ has almost invariant vectors in $L_{p}(R){'}(\pi^{p})$.\\

(i) The proof is similar as the previous proof, using Theorem 2.1.3 in \cite{Cherix2001} based on a construction due to Connes and weiss, instead of using Jolissaint's theorem. 
\end{proof}

\section{Affine actions by isometries on $L_{p}(\mathcal{M})$}

Here is a non-commutative analog of the implication $(i)\Rightarrow (ii)$ in Theorem \ref{thm-Nowak}.
\begin{thm}\label{thm-proper1}
Let $G$ be a second countable locally compact group with the Haagerup property. Let $1\leq p<\infty$. Then there exists a proper isometric affine action of $G$ on $L_{p}(l^{\infty}\otimes R)$, where $R$ is the hyperfinite ${\rm II}_{1}$ factor.
\end{thm}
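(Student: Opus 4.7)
The plan is to realize $L_{p}(l^{\infty}\otimes R)$ as an $l_{p}$-direct sum of copies of $L_{p}(R)$ and to build a proper $1$-cocycle by stacking small displacements of almost-invariant vectors coming from a single strongly mixing representation of $G$ on $L_{p}(R)$. Equipping $l^{\infty}$ with the counting measure makes $l^{\infty}\otimes R$ a semi-finite von Neumann algebra whose $L_{p}$-space is isometrically isomorphic to $l_{p}(\mathbb{N},L_{p}(R))=\{(x_{n})_{n}\esp\vert\esp x_{n}\in L_{p}(R),\,\sum_{n}\n x_{n}\n_{p}^{p}<\infty\}$.

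First I would apply Theorem \ref{thm-Hpmix}(2)(ii) to produce a strongly mixing orthogonal representation $\pi^{p}:G\rightarrow O(L_{p}(R))$ with almost-invariant vectors in $L_{p}(R)'$; by Lemma \ref{lem-smL0}, $L_{p}(R)'=L_{p}^{0}(R)$. Fix an exhaustion $K_{1}\subset K_{2}\subset\cdots$ of $G$ by compact subsets and select unit vectors $v_{n}\in L_{p}^{0}(R)$ with $\sup_{g\in K_{n}}\n\pi^{p}(g)v_{n}-v_{n}\n_{p}<2^{-n}$. Let $\tilde\pi$ denote the diagonal representation of $G$ on $l_{p}(\mathbb{N},L_{p}(R))$, i.e.\ $\tilde\pi(g)(x_{n})_{n}=(\pi^{p}(g)x_{n})_{n}$, and define $b:G\rightarrow l_{p}(\mathbb{N},L_{p}(R))$ by $b(g)=(\pi^{p}(g)v_{n}-v_{n})_{n}$. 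The cocycle identity $b(gh)=\tilde\pi(g)b(h)+b(g)$ is immediate, so $\alpha(g)x=\tilde\pi(g)x+b(g)$ defines an affine isometric action of $G$ on $L_{p}(l^{\infty}\otimes R)$.

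Well-definedness and continuity of $b$ are handled by the same tail estimate: for any $g$ there is $n_{0}$ with $g\in K_{n}$ for $n\geq n_{0}$, making the tail $\sum_{n\geq n_{0}}\n\pi^{p}(g)v_{n}-v_{n}\n_{p}^{p}\leq\sum_{n\geq n_{0}}2^{-np}$ geometrically small; for $g$ in a compact neighborhood of $g_{0}$ contained in some $K_{N}$, the tail for $n\geq N$ is uniformly controlled, while the finite head is continuous by continuity of each $\pi^{p}(\cdot)v_{n}$.

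The main obstacle is properness of $b$: the choice of $v_{n}$ only gives smallness of $\pi^{p}(g)v_{n}-v_{n}$ on $K_{n}$, so we need a nontrivial uniform lower bound as $g\rightarrow\infty$, which I would extract from strong mixing. Let $v_{n}^{*}\in L_{p'}(R)$ be the Mazur-map dual of $v_{n}$, so that $\n v_{n}^{*}\n_{p'}=1$ and $\tau(v_{n}v_{n}^{*})=1$. Using Lemma \ref{lem-smJordan} (each $\pi^{p}(g)$ restricts to a Jordan automorphism of $R$ and extends to an $L_{p}$-isometry), the mixing identity $\tau(\pi^{p}(g)(x)y)\rightarrow\tau(x)\tau(y)$ extends from $R\times R$ to $L_{p}(R)\times L_{p'}(R)$ by density. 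Applying this with $x=v_{n}$ (of zero trace) gives $\tau(\pi^{p}(g)v_{n}\cdot v_{n}^{*})\rightarrow 0$, hence $\tau\bigl((\pi^{p}(g)v_{n}-v_{n})v_{n}^{*}\bigr)\rightarrow-1$, and H\"older yields $\liminf_{g\rightarrow\infty}\n\pi^{p}(g)v_{n}-v_{n}\n_{p}\geq 1$. Choosing compacts $C_{n}\subset G$ with $\n\pi^{p}(g)v_{n}-v_{n}\n_{p}\geq 1/2$ off $C_{n}$, one gets $\n b(g)\n_{p}^{p}\geq N\cdot 2^{-p}$ whenever $g\notin C_{1}\cup\cdots\cup C_{N}$; since $N$ is arbitrary, $\n b(g)\n_{p}\rightarrow\infty$ as $g\rightarrow\infty$, proving properness. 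The case $p=2$ is immediate from the classical Haagerup property, since $L_{2}(l^{\infty}\otimes R)$ is then a separable infinite-dimensional Hilbert space.
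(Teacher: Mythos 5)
Your construction is essentially the one in the paper: both realize $L_{p}(l^{\infty}\otimes R)$ as the $\ell_{p}$-direct sum $\oplus_{n}^{p}L_{p}(R)$ and build the cocycle $b(g)=\oplus_{n}\bigl(\pi^{p}(g)v_{n}-v_{n}\bigr)$ by stacking displacements of almost-invariant trace-zero vectors of the strongly mixing representation on $L_{p}(R)$ furnished by (the proof of) Theorem \ref{thm-Hpmix}, with properness forced by strong mixing. The one step where you genuinely diverge is the uniform lower bound $\liminf_{g\rightarrow\infty}\Vert\pi^{p}(g)v_{n}-v_{n}\Vert_{p}\geq c>0$: the paper obtains it at the $L_{2}$-level, where strong mixing gives $\Vert\pi^{2}(g)v_{n}-v_{n}\Vert_{2}^{2}\rightarrow 2\Vert v_{n}\Vert_{2}^{2}$, and then transfers both estimates to $L_{p}$ through the uniform continuity of the Mazur map; you instead stay entirely in $L_{p}$, pairing $\pi^{p}(g)v_{n}-v_{n}$ against the dual vector $v_{n}^{*}\in L_{p'}(R)$ and using H\"older together with the extension of the mixing identity $\tau(\pi^{p}(g)(x)y)\rightarrow\tau(x)\tau(y)$ from $R\times R$ to $L_{p}\times L_{p'}$ (a correct density argument, using that the forms are uniformly bounded by $\Vert x\Vert_{p}\Vert y\Vert_{p'}$, but one you should write out, and one that relies on Lemma \ref{lem-smJordan} and hence on $p\neq 2$ --- your separate treatment of $p=2$ is therefore needed, not just cosmetic). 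A further small point in your favour: by selecting the $v_{n}$ along an exhaustion $K_{n}$ with defect $<2^{-n}$ you make the $\ell_{p}$-summability of $b(g)$ and the continuity of $b$ explicit, whereas the paper's condition $(2)$ alone does not literally yield summability without passing to such a subsequence. Both routes are sound; yours trades the Mazur-map transfer for a duality argument and is somewhat more self-contained at the properness step.
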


\begin{proof}
We adapt the proof given in \cite{Nowak2006} to our non-commutative setting.\\

Take the orthogonal representation $\pi^{2}:G\rightarrow O(L_{2}(R))$ and the sequence $(v_{n})$ of almost invariant vectors for $\pi^{2}$ as in the proof of Theorem \ref{thm-Hpmix}. Set $w_{n}=M_{p,2}(v_{n})$ for all $n\in{\mathbb{N}}$, the images by the Mazur map $M_{p,2}$ of the vectors $v_{n}$. Notice that $\vert\vert w_{n}\vert\vert_{p}^{p}=\vert\vert v_{n}\vert\vert_{2}^{2}=\frac{1}{4}$. Using the uniform continuity of the Mazur map, one readily checks that $\pi^{p}=M_{2,p}\circ\pi^{2}\circ M_{p,2}$ defines an orthogonal representation on $L_{p}(R)$ such that
 $$\exists C>0\esp :\esp\lim_{g\rightarrow\infty}\vert\vert\pi^{p}(g)w_{n}-w_{n}\vert\vert_{p}^{p}\geq C\textrm{ for all }n\in\mathbb{N}.\esp(1)$$
 and
 $$\lim_{n\rightarrow\infty}\sup_{g\in K}\vert\vert\pi^{p}(g)w_{n}-w_{n}\vert\vert_{p}=0\textrm{ for all compact subset }K\textrm{ of }G.\esp(2)$$
 It follows from $(2)$ that the formula
 $$b(g)=\oplus_{n}\pi^{p}(g)w_{n}-w_{n}$$
 defines an element in $\oplus_{n}^{p}L_{p}(R)$. It is clear that $b:G\rightarrow \oplus_{n}^{p}L_{p}(R)$ is a $1$-cocycle associated to the representation $\oplus_{n}\pi^{p}$ to $\oplus_{n}^{p}L_{p}(R)$. By $(1)$, this cocycle is proper and the theorem is proved once we have noticed that $\oplus_{n}^{p}L_{p}(R)$ is isometrically isomorphic to $L_{p}(l^{\infty}\otimes R)$.
 \end{proof}
\begin{rem}{\rm
We don't know whether the converse of Theorem \ref{thm-proper1} is true for $p\leq2$. The method used in \cite{Nowak2006} in the classical $L_{p}$ breaks down in the non-commutative setting, since the distance associated to the norm of a non-commutative $L_{p}$-space (as $C_{p}$) is no longer a kernel conditionally of negative type. }
\end{rem}

We show in the following proof that from a proper cocycle with values in $L_{p}(l^{\infty}\otimes R)$, one can construct a proper cocycle with values in the space $L_{p}(\mathcal{B}(l_{2})\otimes R)$. Then Theorem \ref{thm-proper2} is a straightforward consequence of Theorem \ref{thm-proper1}.

\begin{proof}[Proof of Theorem \ref{thm-proper2}]

Take $b:G\rightarrow \oplus_{n}^{p}L_{p}(R)$ the proper $1$-cocycle constructed in the proof of Theorem \ref{thm-proper1}. Recall that $b$ is associated to a representation $\oplus_{n}\pi^{p}:G\rightarrow O(\oplus_{n}^{p}L_{p}(R))$, where $\pi^{p}:G\rightarrow O(L_{p}(R))$ is an orthogonal representation induced by an action by automorphisms of $R$, that is
$$\pi^{p}(g)x=\pi(g)x\textrm{ for all }g\in G, x\in R,$$
where every $\pi(g)$ is an automorphism of the algebra $R$. The isometric isomorphism between $\oplus_{n}^{p}L_{p}(R)$ and $L_{p}(l^{\infty}\otimes R)$ and the proper cocycle $b$ induce a proper $1$-cocycle $\overline{b}:G\rightarrow L_{p}(l^{\infty}\otimes R)$ associated to the orthogonal representation $\overline{\pi^{p}}:G\rightarrow O(L_{p}(l^{\infty}\otimes R))$ defined by
$$\overline{\pi^{p}}(g)(x_{n})_{n}\otimes a=(x_{n})_{n}\otimes\pi(g)a\esp\textrm{ for all }g\in G, (x_{n})_{n}\in l^{\infty}, a\in R. \esp (1)$$
Let us define 
$$\widetilde{\pi^{p}}(g)A\otimes a=A\otimes\pi(g)a\esp\textrm{ for all }g\in G, A\in \mathcal{B}(l_{2}), a\in R. \esp (2)$$
Since $\pi:G\rightarrow Aut(R)$ is a homomorphism from $G$ into the group of automorphisms of $R$, it is clear that $\widetilde{\pi^{p}}:G\rightarrow O(L_{p}(\mathcal{B}(l_{2})\otimes R))$ is a well-defined orthogonal representation of $G$ on $L_{p}(\mathcal{B}(l_{2})\otimes R)$. \\
Sequences in $l^{\infty}$ can be seen as multiplication operators in $\mathcal{B}(l_{2})$. This induces a linear and isometric embedding 
\begin{displaymath}
\begin{split}
&L_{p}(l^{\infty}\otimes R)\rightarrow L_{p}(\mathcal{B}(l_{2})\otimes R)\\
&\esp\esp\esp\esp\esp\esp\esp\esp\esp\esp\esp x\esp\esp\mapsto\esp\tilde{x}.
\end{split}
\end{displaymath}
By formulas $(1)$ and $(2)$, we have
$$\widetilde{\pi^{p}}(g)\tilde{x}=\widetilde{\overline{\pi^{p}}(g)x}\textrm{ for all }g\in G, x\in L_{p}(l^{\infty}\otimes R).$$
Now define $\tilde{b}(g)=\widetilde{\overline{b}(g)}$ for all $g\in G$. By the previous formula, $\tilde{b}:G\rightarrow L_{p}(\mathcal{B}(l_{2})\otimes R)$ is a $1$-cocycle associated to the representation $\tilde{\pi^{p}}$. Moreover, we have $\n \tilde{b}(g)\n_{p}=\n \overline{b}(g)\n_{p}$ for all $g\in G$. Hence the cocycle $\tilde{b}$ is proper, and Theorem \ref{thm-proper2} is proved.
\end{proof}

\bibliographystyle{amsplain}
\bibliography{biblio}

\providecommand{\bysame}{\leavevmode\hbox to3em{\hrulefill}\thinspace}
\providecommand{\MR}{\relax\ifhmode\unskip\space\fi MR }
% \MRhref is called by the amsart/book/proc definition of \MR.
\providecommand{\MRhref}[2]{%
  \href{http://www.ams.org/mathscinet-getitem?mr=#1}{#2}
}
\providecommand{\href}[2]{#2}
\begin{thebibliography}{10}

\bibitem{Arazy1975isometriesCp}
J.~Arazy, \emph{The isometries of {$C_{p}$}}, Israel J. Math. \textbf{22}
  (1975), no.~3-4, 247--256. \MR{0512953 (58 \#23760)}

\bibitem{bader2007propertyTLp}
U.~Bader, A.~Furman, T.~Gelander, and N.~Monod, \emph{Property ({T}) and
  rigidity for actions on {B}anach spaces}, Acta Math. \textbf{198} (2007),
  no.~1, 57--105. \MR{MR2316269 (2008g:22007)}

\bibitem{bekka2008kazhdan}
B.~Bekka, P.~de~la Harpe, and A.~Valette, \emph{Kazhdan's property ({T})}, New
  Mathematical Monographs, vol.~11, Cambridge University Press, Cambridge,
  2008. \MR{MR2415834 (2009i:22001)}

\bibitem{BBBO2013Tlp}
B.~Bekka and B.~Olivier, \emph{On groups with property $({T}_{l_{p}})$},
  arxiv:1303.5183v1 (2013).

\bibitem{ChatterjiCo2010medianspaces}
I.~Chatterji, C.~Dru{\c{t}}u, and F.~Haglund, \emph{Kazhdan and {H}aagerup
  properties from the median viewpoint}, Adv. Math. \textbf{225} (2010), no.~2,
  882--921. \MR{2671183 (2011g:20059)}

\bibitem{Cherix2001}
P.-A. Cherix, M.~Cowling, P.~Jolissaint, P.~Julg, and A.~Valette, \emph{Groups
  with the {H}aagerup property}, Progress in Mathematics, vol. 197,
  Birkh\"auser Verlag, Basel, 2001, Gromov's a-T-menability. \MR{1852148
  (2002h:22007)}

\bibitem{Cornulier2006Kazhdanpair}
Y.~Cornulier, \emph{Relative kazhdan property}, preprint, arXiv:math/0505193v2
  [math.GR] (2006).

\bibitem{Cowling2005Lie}
M.~Cowling, B.~Dorofaeff, A.~Seeger, and J.~Wright, \emph{A family of singular
  oscillatory integral operators and failure of weak amenability}, Duke Math.
  J. \textbf{127} (2005), no.~3, 429--486. \MR{2132866 (2008a:43006)}

\bibitem{Eymard1972moyenne}
P.~Eymard, \emph{Moyennes invariantes et repr\'esentations unitaires}, Lecture
  Notes in Math. 300, Springer, 1972.

\bibitem{HewittRoss}
E.~Hewitt and K.A. Ross, \emph{Abstract harmonic analysis {I}}, Springer-Verlag
  (1963).

\bibitem{Hochschild1965structureLiegroups}
G.~Hochschild, \emph{The structure of {L}ie groups}, Holden-Day Inc., San
  Francisco, 1965. \MR{0207883 (34 \#7696)}

\bibitem{Kazhdan1977}
D.~Kazhdan, \emph{Some applications of the {W}eil representation}, J. Analyse
  Mat. \textbf{32} (1977), 235--248. \MR{0492089 (58 \#11243)}

\bibitem{Millson1976}
J.~J. Millson, \emph{On the first {B}etti number of a constant negatively
  curved manifold}, Ann. of Math. (2) \textbf{104} (1976), no.~2, 235--247.
  \MR{0422501 (54 \#10488)}

\bibitem{Nowak2006}
P.~W. Nowak, \emph{Group actions on {B}anach spaces and a geometric
  characterization of a-{T}-menability}, Topology Appl. \textbf{153} (2006),
  no.~18, 3409--3412. \MR{2270593 (2008a:20073)}

\bibitem{Olivier12TLp}
B.~Olivier, \emph{Kazhdan's property {$(T)$} with respect to non-commutative
  {$L_p$}-spaces}, Proc. Amer. Math. Soc. \textbf{140} (2012), no.~12,
  4259--4269. \MR{2957217}

\bibitem{Pisier2003Lpspaces}
G.~Pisier and Q.~Xu, \emph{Non-commutative {$L^p$}-spaces}, Handbook of the
  geometry of {B}anach spaces, {V}ol.\ 2, North-Holland, Amsterdam, 2003,
  pp.~1459--1517. \MR{MR1999201 (2004i:46095)}

\bibitem{stormer1965jordanmorphisms}
E.~St{\o}rmer, \emph{On the {J}ordan structure of {$C^{\ast} $}-algebras},
  Trans. Amer. Math. Soc. \textbf{120} (1965), 438--447. \MR{MR0185463 (32
  \#2930)}

\bibitem{yeadon1980isometries}
F.~J. Yeadon, \emph{Isometries of noncommutative {$L^{p}$}-spaces}, Math. Proc.
  Cambridge Philos. Soc. \textbf{90} (1981), no.~1, 41--50. \MR{MR611284
  (82g:46108)}

\end{thebibliography}

\end{document}